\theoremstyle{plain}
\newtheorem{theorem}{Theorem}[section]
\newtheorem{lemma}[theorem]{Lemma}
\theoremstyle{definition}
\newtheorem{proof*}{Proof.}
\numberwithin{equation}{section}
\newtheorem{remark}[theorem]{Remark}
\DeclareMathAlphabet{\mathpzc}{OT1}{pzc}{m}{it}
\newtheorem*{acknowledgment}{Acknowledgment}
\newcommand{\arc}[1]{\overline{#1} }
\newcommand{\A}{{\mathcal A}} 
\newcommand{\B}{{\mathcal B}} 
\newcommand{\C}{{\mathcal C}} 
\newcommand{\F}{{\mathcal F}} 
\newcommand{\h}{{\mathcal H}} 
\newcommand{\I}{{\mathcal I}} 
\newcommand{\K}{{\mathcal K}} 
\renewcommand{\L}{{\mathcal L}} 
\newcommand{\m}{{\mathcal S}} 
\renewcommand{\P}{{\mathcal P}} 
\newcommand{\Q}{{\mathcal Q}} 
\newcommand{\R}{{\mathcal R}} 
\renewcommand{\S}{{\mathscr S}} 
\newcommand{\s}[1]{\mathcal S^{*}_{#1} }
\newcommand{\U}{{\mathcal U}} 
\newcommand{\PG}{\mbox{\rm PG}}
\date{\today}
\author{Alice M. W. HUI} \thanks{Current address: Department of Mathematics, The Chinese University of Hong Kong}
\title[A geometric proof of a Wilbrink's characterization]{A geometric proof of Wilbrink's characterization of even order classical unitals}
\begin{document}

\begin{abstract}
Using geometric methods and without invoking deep results from group theory, we prove that a classical unital of even order $n\geq4$ is characterized by two conditions (I) and (II): (I) is the absence of O'Nan configurations of four distinct lines intersecting in exactly six distinct points; (II) is a notion of parallelism. This was previously proven by Wilbrink (1983), where the proof depends on the classification of finite groups with a split BN-pair of rank 1.
\end{abstract}

\maketitle

\noindent
Keywords: unital; classical unital; Hermitian curve; spread\\
MSC(2000): 51E20, 05B25, 05B25, 51E21, 51E23\\

\section{Introduction}\label{st 1}
\noindent
A unital of order $n>2$ is a design with parameters $2$--$(n^3 + 1, n + 1, 1)$ (see \cite{Dem2, BE}). If $\pi$ is a projective plane of order $m$, i.e. a $2$--$(m^2 + m + 1, m + 1, 1)$ design, and if a unital $U$ is an induced substructure of $\pi$, then we call $U$ an {\it embedded unital}.
Some embedded unitals $U$ of order $n$ are the incidence structure formed from the absolute points and non-absolute lines of a unitary polarity in a projective plane $\pi$ of order $n^2$.
Any unital which is isomorphic to such a unital $U$ as a design is called a {\it polar unital}. Further if the ambient plane is $\PG(2,n^2)$, then the unital is called {\it classical}.
The set of absolute points of a unitary polarity in $\PG(2,n^2)$ is called the {\it Hermitian curve} (see \cite{Hir1, HP}).

In 1972 \cite{ON}, O'Nan showed that the classical unital does not contain a configuration of four lines meeting in six points (an {\it O'Nan configuration}). Piper (1981) \cite{Pi} conjectured that this property characterizes the classical unital. Wilbrink (1983) \cite{Wil} characterized the classical unital by three conditions (I), (II) and (III). His proof depends on a result in the classification of finite groups with a split BN-pair of rank 1. Wilbrink \cite{Wil} further proved that when the order of unital is even, (III) is a necessary condition of (I) and (II).

Let $\U$ be a unital of even order $n \geq 4$ satisfying Wilbrink's conditions (I) and (II). In this article, we give an alternative proof that $\U$ is classical without invoking deep results from group theory, as follows. We construct from $\U$ a hyperbolic Buekenhout unital $\U'$ \cite{Bkt} in $\PG(2, n^2)$, via the Bruck-Bose construction of projective plane \cite{BB1,BB2}. Then we prove that $\U$ is isomorphic to $\U'$, and hence is classical by a result of Barwick \cite{Bar}. To construct $\U'$, we shall consider some inversive planes and a generalized quadrangle derived from $\U$ (Wilbrink \cite{Wil}), and the special spreads of $\U$ (Hui and Wong \cite{HW2}). We also need a theorem of Cameron and Knarr \cite{CK} on how to build a regular spread of $\PG(3,q)$ from a tube in $\PG(3,q)$, and a theorem of Hui \cite{H} on when two inversive planes are identical.

In Section \ref{st 2}, we follow Wilbrink's \cite{Wil} construction of the inversive planes $\I(x)$ at each point $x$ of $\U$. Then following the work in \cite{HW2}, we construct a special spread $\mathcal S_L$ for each line $L$ of $\U$ using these inversive planes. As a consequence, $\U$ can be embedded in a projective plane $\pi$ as a polar unital \cite{HW2}. This enables us to define self-polar triangles with respect to $\U$ intrinsically in terms of Wilbrink's $x$-parallelism (Theorem \ref{thm LMN}).

In Section \ref{st 3}, by studying the inversive planes $\I(x)$ for various $x$'s, we prove that the set of points of $\U$ is partitioned into a self-polar triangle, and $n - 2$ subsets of $(n+1)^2$ points triply ruled by lines through the vertices of the triangle (Theorem \ref{thm bigP}). This describes how unital lines in $\pi$ through distinct non-unital points intersect.

In Section \ref{st 4}, we fix one line $L$ of $\U$ and consider the generalized quadrangle $GQ(L)$ as in Wilbrink \cite{Wil}. Through $GQ(L)$, we associate $\U$ with $Q(4,n)$ formed by the set of points and lines of a parabolic quadric $\P$ in $\PG(4,n)$ \cite{PT}. Wilbrink's construction gives naturally a 3-dimensional subspace $\Sigma$ of $\PG(4,n)$. We find a spread $\S$ in $\Sigma$ by studying the special spread $\mathcal S_L$. We then prove that $\S$ is regular (Theorem \ref{thm regular}) using a result on tubes in $\PG(3,n)$ (Cameron and Knarr \cite{CK}) and properties of self-polar triangles with respect to $\U$.

In Section \ref{st 5}, we prove that the partition of $\U$ into a self-polar triangle and triply ruled sets corresponds to a pencil of quadrics in $\Sigma$ of two lines and $n - 1$ hyperbolic quadrics (Theorem \ref{thm pencil}). In particular, this gives a correspondence between the structure of $\U$ and that of $\Sigma$. The regularity of $\S$ is essential for proving Theorem \ref{thm pencil}, because we have to describe the reguli of $\S$ in terms by geometry of $\U$ by applying a result of Hui \cite{HW2} to the Miquelian inversive plane formed by the lines and reguli of $\S$ (Bruck \cite{Bru}).

In Section \ref{st 6}, by considering the spread $\S$ of $\Sigma$ in $\PG(4,n)$, we construct a projective plane $\overline{\pi(\S)}$ by the Bruck-Bose construction \cite{BB1}. Since $\S$ is regular, $\pi(\S)$ is $\PG(2,n^2)$ \cite{BB2}.
By Buekenhout \cite{Bkt}, $\P$ defines a hyperbolic Buekenhout unital $\U'$ in $\overline{ \pi(\S)}$ \cite{Bkt} (also known as a nonsingular Buekenhout unital). By Barwick \cite{Bar}, since $\overline{\pi(\S)}\cong\PG(2,n^2)$, $\U'$ is the classical unital. With the help of Theorem \ref{thm pencil}, we write down an isomorphism between $\U$ and the classical unital $\U'$ (Theorem \ref{thm main}).

\section{Self-polar triangles and Parallelism}\label{st 2}
\noindent
Let $\U$ be a unital of even order $n \geq 4$, satisfying Wilbrink's first two conditions (I) and (II) \cite{Wil}:
\begin{enumerate}
\item[(I)] $\U$ contains no O'Nan configurations.
\item[(II)] Let $x$ be a point, $L$ be a line through $x$, and $M$ be a line missing $x$, such that $L$ and $M$ meets. For any point $y'\in L \setminus \{x\}$, there is a line $M'$ through $y'$ but not $x$ meeting all lines from $x$ which meet $M$.
\end{enumerate}

\noindent Following Wilbrink \cite{Wil}, we introduce {\it $x$-parallelism} in $\U$ \cite{Wil}:
\begin{itemize}
\item[]Let $x$ be a point, and $M,M'$ be two lines missing $x$. $M,M'$ are said to be $x$-parallel if $M,M'$ intersect the same lines through $x$. We write $M\|_x M'$.
\end{itemize}

\noindent $\|_x$ defines an equivalence relation on the set of all lines missing $x$ \cite{Wil}. We denote the equivalence class of a line $M$ under $\|_x$ by $\arc{M}^x$, or simply $\arc{M}$ if there is no confusion.

Further following Wilbrink \cite{Wil}, we introduce an inversive plane $\I(x)$ of order $n$ for every point $x$ in $\U$ (\cite{Wil}, Lemmas 1, 2 and Corollary 3; see also \cite{HW2}). The points of $\I(x)$ are the lines of $\U$ through $x$ together with a symbol $\infty_x$. The circle set of $\I(x)$ is $\mathcal C^x \cup \mathcal C_x$. Here $\mathcal C^x$, $ \mathcal C_x$ are given by:
$\mathcal C^x$ is the set of $\|_x$-equivalence classes on the set of lines of $\U$ missing $x$;
$\mathcal C_x$ consists of blocks of the form $C_x(L, L') \cup \{\infty_x\}$, where for any lines $L,L'$ on $x$, $C_x(L,L')$ = $\{L,L'\} \cup \{L'' \mid L''$ is a line through $x$ such that no line of through $x$ meets $L, L'$ and $L''\}$.
The incidence in $\I(x)$ is defined as follows:
Whenever $L$ is a line of $\U$ through $x$ and $M$ is a line of $\U$ missing $x$, $L$ is incident with $\arc M$ in $\I(x)$ if and only if $L$ meets $M$ in $\U$;
whenever $L, L', L''$ are lines of $\U$ through $x$, $L$ is incident with $C_x(L', L'')$ in $\I(x)$ if and only if $L \in C_x (L',L'')$;
the point $\infty_x$ is incident with all circles in $\C^x$ but none in $\C_x$.

According to Dembowski \cite{Dem1}, $\I(x)$ is egglike. By Thas \cite{Thas1}, every flock in $\I(x)$ is uniquely determined by its carriers. We denote the unique flock in $\I(x)$ with carriers $p_1$ and $p_2$ by $\F(p_1,p_2)$.

With the help of flocks of the form $\F(L,\infty_x)$, $\U$ can be shown to satisfy condition ($P$) \cite[Theorem 1.6]{HW2}, formulated in terms of {\it special spreads} in $\U$.

Let $\m$ be a spread of $\U$, with $L \in \m$. Then $\m$ is {\it special with respect to $L$} if the following condition is satisfied:
\begin{itemize}
\item[ ]
for any point $x$ on $L$, $\m \setminus \{ L\}$ can be partitioned into $n - 1$ subsets $\L_1^x, \cdots, \L_{n - 1}^x$, each of cardinality $n$, and the set of lines on $x$, except $L$, can be partitioned into $n - 1$ subsets $\K_x^1, \cdots, \K_x^{n - 1}$, each of cardinality $n + 1$, such that whenever $L' \in \L_i^x$ and $K \in \K_x^j$, $L'$ and $K$ intersect if and only if $i = j$.
\end{itemize}

\noindent Now, \cite[Theorem 1.6]{HW2} says that $\U$ satisfies condition ($P$), which is a strengthened version of condition ($p$):
\begin{enumerate}
\item[($p$)] Let $L_1, L_2, \cdots, L_{n^4-n^3 + n^2}$ be the lines of $\U$. There exists a family of lines $\mathscr F = \{ \m_{L_1}$, $\m_{L_2}$, $\cdots$, $\m_{L_{n^4-n^3 + n^2}} \}$ such that:
\begin{enumerate}
 \item[(i)] For $i = 1, 2, \cdots, n^4-n^3 + n^2$, $\m_{L_i}$ is a spread containing $L_i$.
 \item[(ii)] For $i\neq j$, $L_i \in \m_{L_j} \setminus \{L_j\}$ if and only if $L_j \in \m_{L_i} \setminus \{L_i\}$.
 \item[(iii)] For any two lines $L_i$ and $L_j$ missing each other, there exists a line $L_k$ such that $ L_k \in \m_{L_i} \setminus \{L_i\}$ and $ L_k \in \m_{L_j} \setminus \{L_j\}$.
\end{enumerate}
\item[($P$)] ($p$) holds such that for $i = 1,2,\cdots,n^4-n^3 + n^2$, $\m_{L_i}$ is a special spread with respect to $L_i$.
\end{enumerate}

\noindent In the above statement, the set $\m_L$ is given explicitly by this construction:
\begin{itemize}
\item[] Pick some point $x$ on $L$. $\m_L$ is given by $\arc{L_1}$ $\cup \arc{L_2}$ $\cup \cdots$ $\cup \arc{L_{n - 1}}$ $\cup \{L\}$, where $\arc{L_1}$, $\arc{L_2}$, $\cdots$, $\arc{L_{n - 1}}$ are the circles of $\F(L, \infty_{x})$ in $\I(x)$.
\end{itemize}
$\m_L$ is shown to be independent of $x$ \cite[Lemma 5.4]{HW2}.

For each line $L$ of $\U$, denote by $\s{L}$ the set $\m_L \setminus \{L\}$.

By \cite[Theorem 1.1]{HW2}, since $\U$ satisfies ($p$), $\U$ can be embedded in a projective plane $\pi$ as a polar unital, so that in $\pi$, for each unital line $J$, the unital lines of $\pi$ through the pole of $J$ are exactly the lines in $\s{J}$. Since two distinct points in $\pi$ determine a unique line, $\s{J}\cap \s{J'}$ contains at most one (unital) line for any distinct unital lines $J,J'$. Thus there are at most three lines in $\m_{J}\cap \m_{J'}$.
Whenever $L,M,N$ are three distinct lines of $\U$ satisfying $\{L,M,N\}=\m_{L}\cap \m_{M} = \m_{L}\cap \m_{N} =\m_{M}\cap \m_{N}$, we say that $L,M,N$ form a {\it self-polar triangle with respect to $\U$}.

\begin{lemma}\label{lemma polartri}
Let $L,M$ be disjoint lines of $\U$. If $M\in \s{L}$ (or equivalently $L \in \s{M}$), then there exists a unique line $N$ of $\U$ such that $L,M,N$ form a self-polar triangle with respect to $\U$.
\end{lemma}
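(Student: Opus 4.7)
My plan is to exploit the polar embedding of $\U$ into a projective plane $\pi$ of order $n^2$ provided by \cite[Theorem 1.1]{HW2}, together with its unitary polarity $\sigma$. For each unital line $J$, write $J^\sigma$ for its pole; since $J$ carries $n+1$ unital points, $J$ is non-absolute, so $J^\sigma$ does not lie on $J$ and is moreover a non-unital point of $\pi$. By the cited theorem, $\s{J}$ is precisely the set of unital lines through $J^\sigma$, which in view of the symmetry of $\sigma$ yields the correspondence $J'\in\s{J}\Leftrightarrow J^\sigma\in J'\Leftrightarrow (J')^\sigma\in J\Leftrightarrow J\in\s{J'}$ that I shall use throughout.

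The line $N$ is then essentially forced. Because $L$ and $M$ are disjoint in $\U$ but must meet in the plane $\pi$, their unique intersection point $P$ is non-unital, hence non-absolute. Set $N:=P^\sigma=L^\sigma M^\sigma$; this is well defined since $L^\sigma\neq M^\sigma$ (otherwise $L=M$). To see that $N$ is a unital line, I argue that $N$ is non-absolute: were $N$ absolute, $P=N^\sigma$ would lie on $N$, forcing $P$ to be absolute and hence unital, a contradiction. By construction $L^\sigma,M^\sigma\in N$, so $N\in\s{L}\cap\s{M}$; conversely $P=N^\sigma\in L\cap M$ gives $L,M\in\s{N}$. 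The three lines are pairwise distinct because no unital line contains its own pole.

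It remains to verify the self-polar triangle identities. For any two distinct unital lines $J,J'$, the set $\s{J}\cap\s{J'}$ consists of the unital lines through both of the distinct poles $J^\sigma$ and $(J')^\sigma$, so contains at most one line. Combined with $J\in\s{J'}$ and $J'\in\s{J}$ (which hold for each of the pairs drawn from $\{L,M,N\}$), this gives $\m_J\cap\m_{J'}=\{J,J'\}\cup(\s{J}\cap\s{J'})$, and therefore $\m_L\cap\m_M=\m_L\cap\m_N=\m_M\cap\m_N=\{L,M,N\}$. Uniqueness is immediate, since any other self-polar partner $N'$ of $L$ and $M$ must lie in $\m_L\cap\m_M\setminus\{L,M\}=\{N\}$. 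The only step that requires any care is checking that $N$ is a unital line; once that is in hand, the rest reduces to bookkeeping inside $\pi$, with no further recourse to Wilbrink's conditions or the inversive planes $\I(x)$.
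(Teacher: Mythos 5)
Your proposal is correct and takes essentially the same route as the paper: both pass to the polar embedding of $\U$ in $\pi$, define $N$ as the polar line of the non-unital point $L\cap M$, and conclude from the fact that $\m_J\cap\m_{J'}$ contains at most three lines. You simply make explicit a few steps the paper leaves implicit (that $N$ is non-absolute and hence a unital line, and the computation $\m_J\cap\m_{J'}=\{J,J'\}\cup(\s{J}\cap\s{J'})$), and you derive the reciprocity $N\in\s{L}\cap\s{M}$ directly from the symmetry of the polarity where the paper cites condition ($p$)(ii) — the same fact in different clothing.
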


\begin{proof}
In $\pi$, $L$ and $M$ meet in a unique non-unital point $a$. Let $N$ be the polar line of $a$.
By construction of $\pi$, $L,M\in \s{N}$ and $N\neq L,M$.
Since $\U$ satisfies ($p$), $N\in \m_{L} \cap \m_{M}$ by ($p$)(ii).
By ($p$)(i), $L \in \m_{L}$, $M \in \m_{M}$ and $N \in \m_{N}$.
The result follows from the fact the there are at most three lines in $\m_{L}\cap \m_{M}$, $\m_{L}\cap \m_{N}$ and $\m_{M}\cap \m_{N}$.
\end{proof}

\begin{theorem}\label{thm LMN}
Let $L,M,N$ be disjoint lines of $\U$. Then the following are equivalent.
\begin{enumerate}
 \item \label{item LMN1}$L,M,N$ form a self-polar triangle with respect to $\U$.
 \item \label{item LMN2}$L \|_z M$ for any point $z \in N$.
 \item \label{item LMN4} Any line meeting two of $L,M,N$ meet all of $L,M,N$.
 \item \label{item LMN3}$M \in \s{L}$, and there exist distinct points $z_1,z_2 \in N$ such that $L \|_{z_1} M$ and $L \|_{z_2} M$.
\end{enumerate}
\end{theorem}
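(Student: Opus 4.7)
The approach is to work in the polar-unital embedding $\pi$ of $\U$ guaranteed by \cite{HW2}, Theorem 1.1, where for each unital line $J$ the family $\s{J}$ is the set of unital lines through $\mathrm{pole}(J)$; condition $(1)$ then translates to saying $L, M, N$ are the sides of a self-polar triangle in $\pi$ with non-unital vertices $a = M \cap N = \mathrm{pole}(L)$, $b = L \cap N = \mathrm{pole}(M)$, $c = L \cap M = \mathrm{pole}(N)$, since $M, N \in \s{L}$ place both $M, N$ through $\mathrm{pole}(L)$, and symmetrically. I shall establish the cycle $(1) \Rightarrow (2) \Leftrightarrow (3)$, $(1) \Rightarrow (4)$, and $(4) \Rightarrow (1)$.

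The equivalence $(2) \Leftrightarrow (3)$ is a direct combinatorial unpacking. For $(3) \Rightarrow (2)$, fix $z \in N \cap \U$: every unital line $\ell$ through $z$ meets $N$ at the unital point $z$, so $(3)$ forces $\ell$ to meet $L$ in $\U$ iff $\ell$ meets $M$ in $\U$, which is exactly $L \|_z M$. For $(2) \Rightarrow (3)$, let $K$ be a unital line with $p := K \cap L \in \U$ and $q := K \cap M \in \U$; pick any $z \in N \cap \U$ and observe that the $n+1$ lines $\{zp' : p' \in L \cap \U\}$ are distinct (because $p' \notin N$ as $L, N$ are disjoint in $\U$), and each meets $M$ in $\U$ by $(2)$, so they realise each of the $n+1$ points of $M \cap \U$. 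Choosing $z_0 \in N \cap \U$ with $z_0 p \cap M = q$ forces $K = pq$ to pass through $z_0$, so $K$ meets $N$ at $z_0 \in \U$.

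For $(1) \Rightarrow (2)$, fix $z \in N \cap \U$ and $p \in L \cap \U$; I claim $zp \cap M$ is unital. Suppose not. Using the pole description of $\m_L, \m_M, \m_N$-membership in $\pi$, one verifies $zp \notin \m_L \cup \m_M \cup \m_N$: for example $zp \in \m_L$ would require $a \in zp$, but $a, z \in N$ and $p \notin N$ rule out $a, z, p$ being collinear. I then use the spread partitions furnished by the three $\m_J$ together with condition (II) to produce an auxiliary unital line whose incidences with $L$, $M$, and $zp$ yield four unital lines meeting in exactly six distinct unital points, contradicting $(\mathrm{I})$. Hence $zp \cap M$ is unital and $L \|_z M$. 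The implication $(1) \Rightarrow (4)$ is then immediate: $(1)$ gives $M \in \s{L}$ directly, and $(1) \Rightarrow (2)$ supplies the parallelism at each of the $n + 1 \geq 3$ points of $N \cap \U$, so in particular at any two distinct $z_1, z_2$.

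For $(4) \Rightarrow (1)$, apply Lemma \ref{lemma polartri} to $(L, M)$: since $M \in \s{L}$ and $L, M$ are disjoint, there is a unique line $N_0$ with $L, M, N_0$ forming a self-polar triangle. By $(1) \Rightarrow (2)$ applied to this triple, every $z \in N_0 \cap \U$ satisfies $L \|_z M$. I then argue that the set of $z \in \U$ satisfying $L \|_z M$ coincides with $N_0 \cap \U$, via a uniqueness argument for the $\|_z$-class of $L$ in $\I(z)$ combined with Lemma \ref{lemma polartri}: a second parallelism witness $z' \notin N_0$ would, by the same mechanism used in $(1) \Rightarrow (2)$, yield another self-polar triangle $L, M, N_1$ with $N_1 \neq N_0$, contradicting the uniqueness in Lemma \ref{lemma polartri}. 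Hence $z_1, z_2 \in N_0$, and since two distinct points of $\U$ determine a unique line, $N_0 = z_1 z_2 = N$, giving $(1)$. The main obstacle throughout is the O'Nan argument in $(1) \Rightarrow (2)$, where the four chosen unital lines must be selected so that their six pairwise intersections land in $\U$ while avoiding the three non-unital vertices $a, b, c$; the pole-polar calculus in $\pi$ together with condition (II) should supply the auxiliary line needed.
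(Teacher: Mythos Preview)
Your proposal has a genuine gap in the implication $(1)\Rightarrow(2)$, which you yourself flag as ``the main obstacle.'' You assert that from $z\in N$, $p\in L$ with $zp\cap M$ non-unital one can, ``using the spread partitions furnished by the three $\m_J$ together with condition (II),'' produce an auxiliary unital line yielding an O'Nan configuration. But you never say what that auxiliary line is, and there is no obvious candidate: at this point the only information you have is the \emph{definition} of self-polar triangle (mutual membership in $\s{L},\s{M},\s{N}$), and nothing in that definition directly forces a secant through $z$ and $p$ to hit $M$ in a unital point. The paper does \emph{not} prove $(1)\Rightarrow(2)$ by an O'Nan argument at all. Instead it invokes Lemmas~5.3 and~5.4 of \cite{HW2}, which say that whenever $M\in\s{L}$, on each line $K_i$ through $x\in L$ meeting $M$ there is a point $y_i$ with $L\|_{y_i}M$. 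From each $y_i$ one then reads off, via the flock $\F(N_i,\infty_{y_i})$ in $\I(y_i)$, a line $N_i$ through $y_i$ with $L,M\in\s{N_i}$; condition~$(p)$ and the fact that $|\s{L}\cap\s{M}|\le 1$ force all the $N_i$ to equal the given $N$, so the $y_i$ exhaust the points of $N$. This special-spread machinery is the real content of the implication, and your sketch does not supply a substitute for it.

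Your $(4)\Rightarrow(1)$ inherits the same gap: you reduce to showing that any $z'$ with $L\|_{z'}M$ lies on $N_0$, and justify this ``by the same mechanism used in $(1)\Rightarrow(2)$'' --- but that mechanism is exactly what is missing. The paper's route here is different and concrete: assuming $z_1\notin N'$, it picks $x_1,x_2\in L$, sets $J_i=x_i.z_1$, uses the already-established $(1)\Rightarrow(3)$ to get $w:=J_1\cap N'$, uses $(1)\Rightarrow(2)$ to get $L\|_w M$, and then exhibits the explicit O'Nan configuration $J_1,\,J_2,\,M,\,w.x_2$. Your equivalence $(2)\Leftrightarrow(3)$ is essentially right, though your $(2)\Rightarrow(3)$ write-up mixes quantifiers (you vary $p'\in L$ for fixed $z$, then silently switch to varying $z$ for fixed $p$); the paper handles this cleanly by a count: the $(n+1)^2$ lines through points of $N$ meeting both $L$ and $M$ already account for all lines meeting any two of $L,M,N$.
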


\begin{remark}
Since lines in a self-polar triangle play the same role, in Theorem \ref{thm LMN}, statement \eqref{item LMN1} is also equivalent to \eqref{item LMN2}$'$: $M \|_z N$ for any point $z \in L$, or other statement obtained by permuting $L,M,N$ in \eqref{item LMN2} and \eqref{item LMN3}.
\end{remark}

\begin{proof}[Proof of Theorem \ref{thm LMN}]
Let $x$ be a point on $L$, and $K_1, K_2, \cdots, K_{n + 1}$ be the lines through $x$ meeting $M$.

\eqref{item LMN1} $\Rightarrow$ \eqref{item LMN2}: Since $M\in \s{L}$, there is a point $y_i \in K_i$ such that $L \|_{y_i} M$ for $i = 1, 2, \cdots, n + 1$, by Lemmas 5.3 and 5.4 of \cite{HW2}. Hence, for each $i$, $\arc L = \arc M$ in $\I(y_i)$. Then there is a unique point $N_i$ such that $\arc L \in \F (N_i,\infty_{y_i})$. This $N_i$ is a line of $\U$ through $y_i$ such that $L\in \s{N_i} $ and $M \in \s{N_i}$. By condition ($p$), $N_i \in\s{L} \cap \s{M}$. Since there is at least one line in $\s{L} \cap \s{M}$, we have $N_1 = N_2 = \cdots = N_{n + 1}=N$.

\eqref{item LMN2} $\Rightarrow$ \eqref{item LMN4}:
For each point $z\in N$, since $L \|_z M$, there are $n+1$ lines through $z$ meeting $L$ and $M$. Thus there are $(n+1)^2$ lines meeting $L,M,N$. They are the lines meeting at least two of $L,M,N$.

\eqref{item LMN4} $\Rightarrow$ \eqref{item LMN2} follows from the definition of $z$-parallelism.

\eqref{item LMN4} $\Rightarrow$ \eqref{item LMN3}:
It suffices to show $M \in \s{L}$.
Let $z'\in N$. By \eqref{item LMN4}, any line through $z'$ meeting $M$ meets $L$. Thus, $L\|_{z'} M$.
By \eqref{item LMN4}, $K_i$ meets $N$ for $i=1,2,\cdots,n+1$. Thus $z'$ is the point on $K_i$ such that $L\|_{z'} M$.
By Lemmas 5.3 and 5.4 of \cite{HW2}, $M \in \s{L}$.

\eqref{item LMN3} $\Rightarrow$ \eqref{item LMN1}:
Let $N'$ be a line such that $L,M,N'$ form a self-polar triangle. Suppose $z_1\notin N'$. Let $x_1,x_2 \in L$ be distinct points. For $i = 1,2$, let $J_i$ be the line passing through $x_i$ and $z_1$. Then $J_i$ meets $M$ by \eqref{item LMN3}. Since \eqref{item LMN1} implies \eqref{item LMN4}, $J_1$ meets $N'$ at a point, say $w$. Since \eqref{item LMN1} implies \eqref{item LMN2}, we have $L \|_w M$. Thus, the four lines $J_1$, $J_2$, $M$, $w.x_2$ form an O'Nan configuration, which is a contradiction. ($w.x_2$ denotes the line through $w$ and $x_2$.) Hence $z_1 \in N'$. Similarly, $z_2 \in N'$. So $N=N'$.
\end{proof}

With Theorem \ref{thm LMN}, we characterize $\s{M}$ by $z$-parallelism.

\begin{lemma}\label{lemma LJzparallel}
Let $L,M$ be distinct lines of $\U$. Suppose $M\in \s{L}$. Then
$$\s{M}=\{J \mid J \mbox{ is a line of $\U$ such that there is a point $y\in M$ such that }L \|_y J\}.$$
\end{lemma}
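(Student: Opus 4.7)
The plan is to prove the two set inclusions separately, relying on the construction of $\m_M$ via flocks in the inversive planes $\I(y)$, the symmetry of condition $(p)$(ii), and Theorem~\ref{thm LMN}.

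For the inclusion $\s{M} \supseteq \{J:\exists\,y\in M,\ L\|_y J\}$, I proceed as follows. Fix $J$ with $L\|_y J$ for some $y\in M$. From $M\in\s{L}$ and $(p)$(ii) I obtain $L\in\s{M}$, so $L\in\m_M$. By \cite[Lemma~5.4]{HW2} I may rebuild $\m_M$ with the specific base point $y$: then $\m_M\setminus\{M\}$ is the union of the circles of $\F(M,\infty_y)$ in $\I(y)$, each of which is a $\|_y$-equivalence class of $\U$-lines missing $y$. Since $L$ misses $y$ (the lines $L$ and $M$ being disjoint members of the spread $\m_M$) and $L\in\s{M}$, the class $\arc{L}^y$ is one such circle, and is therefore contained in $\s{M}$. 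The relation $L\|_y J$ puts $J$ in $\arc{L}^y\subseteq\s{M}$; and $J\neq M$ because $J$ must miss $y\in M$.

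For the reverse inclusion, let $J\in\s{M}$. The case $J=L$ is immediate by reflexivity of $\|_y$ for any $y\in M$ (since $L$ misses such $y$). For $J\neq L$, the plan is to find $y\in M$ by combining Theorem~\ref{thm LMN} with the flock structure of $\m_M$. Applying Lemma~\ref{lemma polartri} to the pairs $(M,L)$ and $(M,J)$ gives self-polar triangles $(L,M,N_L)$ and $(M,J,N_J)$, from which Theorem~\ref{thm LMN} supplies the parallelisms $L\|_z M$ at every $z\in N_L$ and $J\|_w M$ at every $w\in N_J$. The principal obstacle is propagating these parallelisms to a point $y$ of $M$ itself: since $N_L$ and $N_J$ are each disjoint from $M$ in $\U$ (a pairwise-disjointness consequence of Theorem~\ref{thm LMN}), direct transitivity through $M$ fails. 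My plan to overcome this is to study the partitions of $\s{M}$ arising from the flocks $\F(M,\infty_y)$ as $y$ ranges over the $n+1$ unital points of $M$: each gives a partition of $\s{M}$ into $n-1$ classes of $n$ lines, and I aim to show---using the egglike (hence Miquelian, for even $n$) structure of each $\I(y)$ together with Wilbrink's condition (II)---that some $y\in M$ places $L$ and $J$ into a common class, giving the desired $L\|_y J$.
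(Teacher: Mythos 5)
Your first inclusion ($J\in\s{M}$ whenever $L\|_y J$ for some $y\in M$) is correct and is essentially the paper's argument: $L\in\s{M}$ by ($p$)(ii), so $\arc{L}=\arc{J}$ is a circle of $\F(M,\infty_y)$ in $\I(y)$, and $\s{M}$ is by construction the union of those circles. The problem is the reverse inclusion, where you do not actually give a proof: you correctly identify the obstacle (the parallelisms $L\|_z M$ for $z\in N_L$ and $J\|_w M$ for $w\in N_J$ live at points off $M$, so they cannot be transported to a point of $M$ by transitivity), but what follows is only a declared intention (``I aim to show \dots that some $y\in M$ places $L$ and $J$ into a common class''), not an argument. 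Moreover the tool you propose to use is unsound: an egglike inversive plane of even order need not be Miquelian (for $n=2^{2e+1}$, $e\geq 1$, the Suzuki--Tits ovoids give egglike non-Miquelian planes), and nothing proved up to this point in the paper identifies $\I(y)$ as Miquelian. So the second half of the lemma is a genuine gap in your proposal.

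The intended way to close it is not to chase an explicit $y$ for a given $J$ at all, but to count. Having shown $T:=\{J\mid \exists\,y\in M,\ L\|_y J\}\subseteq\s{M}$ and knowing $|\s{M}|=n^2-n$, it suffices to show $|T|=n^2-n$. For each of the $n+1$ points $z\in M$, the $\|_z$-class of $L$ has exactly $n$ lines. Two of them are always $L$ and the third vertex $N$ of the self-polar triangle on $L,M$ (Lemma \ref{lemma polartri} and Theorem \ref{thm LMN}\eqref{item LMN2}, permuted), and by Theorem \ref{thm LMN}\eqref{item LMN3} no line other than $L$ and $N$ can be $z$-parallel to $L$ for two distinct $z\in M$ (it would then be forced to equal $N$). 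Hence the classes over the $n+1$ points of $M$ overlap only in $\{L,N\}$, giving $|T|=2+(n+1)(n-2)=n^2-n$, which forces $T=\s{M}$. Your counting of the flock partitions of $\s{M}$ was pointing in roughly this direction, but the count that matters is over the $\|_z$-classes of $L$, not over the flocks $\F(M,\infty_y)$.
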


\begin{proof}
Let $J$ be a line. Suppose there is a point $y\in M$ such that $L \|_y J$. Since $L \in \s{M}$ by condition ($p$) and $L \|_y J$, we have $\arc J = \arc L \in \F(M,\infty_y)$ in $\I(y)$. Thus, $J \in \s{M}$ by the construction of $\s{M}$.

To prove the reverse inclusion, it suffices to show that there are $|\s{M}| = n^2 - n $ $J$'s such that $L \| _z J$ for some $z \in M$. Let $N$ be the line such that $L,M,N$ form a self-polar triangle. For each point $z$ on $M$, there are exactly $n$ lines $z$-parallel to $L$. By Theorem \ref{thm LMN}, $L$ and $N$ are two of these $n$ lines.
Apart from $L$ and $N$, no line is $z$-parallel to $L$ for distinct $z$'s on $M$ by Theorem \ref{thm LMN}.
Hence, there are $2 + (n + 1) (n - 2) = n^2 - n$ lines $z$-parallel to $L$ for some $z$ on $M$, as desired.
\end{proof}

\section{Partition $\U$ into a self-polar triangle and triply ruled sets}\label{st 3}
\noindent
In this section, we prove that $\U$ can be partitioned into a self-polar triangle and $n-2$ triply ruled sets of $(n+1)^2$ points (Theorem \ref{thm bigP}).
This result will be used at the end of Section \ref{st 5}, where we relate the partition to a pencil of quadrics in a projective space.

 In Dover \cite[Theorem 3.2]{Dov} (see also Section 4 of Baker et al. \cite{BEKS}), it is proved that any classical unital
 admits such a partition
 by considering coordinates and its automorphism group. The argument in this section will yield a synthetic proof for Dover's result once we prove that $\U$ is classical in Section \ref{st 6}.

We describe how lines of $\m_L$ correspond to circles tangent to $\arc L$ in $\I(y)$, whenever $y$ is not a point on $L$.

\begin{lemma}\label{lemma SLinIy}
Let $L$ be a line of $\U$, and $y$ be a point not on $L$. Let $M \in \m_L$ be the line of $\U$ through $y$. Let $N$ be the line of $\U$ such that $L,M,N$ form a self-polar triangle. Then the following statements hold:
\begin{enumerate}
	\item \label{itemSLinIy1} For every $J \in \m_L \setminus \{L,M,N\}$, $\arc {J}$ is tangent to $\arc L$ in $\I(y)$.
	\item \label{itemSLinIy2} Every circle of type $\C^y$ tangent to $\arc L$ in $\I(y)$ is $\arc {J'}$ for a unique line $J'$ in $\m_L \setminus \{L,M,N\}$.
\end{enumerate}
\end{lemma}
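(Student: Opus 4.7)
The plan is to exploit two facts: both $\arc L$ and $\arc J$ lie in $\C^y$ and hence already share the point $\infty_y$, so tangency reduces to showing they share no further point in $\I(y)$; and the identity $\m_L\cap\m_M=\{L,M,N\}$ coming from the self-polar hypothesis, together with Lemma \ref{lemma LJzparallel}, gives tight control over $y$-parallelism inside $\m_L$.

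For part (1), I would first show $\arc L\neq\arc J$ in $\I(y)$. Because $J\in\m_L\setminus\{L,M,N\}$ and $\m_L\cap\m_M=\{L,M,N\}$, we have $J\notin\m_M$, so $J\notin\s{M}$. Since $M\in\s{L}$ implies $L\in\s{M}$ by property $(p)$(ii), Lemma \ref{lemma LJzparallel} applied to the pair $(M,L)$ (with their roles interchanged) yields the contrapositive $L\not\|_z J$ for every $z\in M$; taking $z=y$ gives $\arc J\neq\arc L$. It then remains to rule out a second common point. Suppose for contradiction that some line $K$ through $y$ meets $L$ at $p$ and $J$ at $q$. Since $K$ also passes through $y\in M$, Theorem \ref{thm LMN}\eqref{item LMN4} applied to $L,M,N$ produces $r=K\cap N$. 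Let $N_J$ be the third side of the self-polar triangle $L,J,N_J$ furnished by Lemma \ref{lemma polartri}. I would rule out $N_J=M$ (otherwise $\m_L\cap\m_J=\{L,J,M\}$ would give $M\in\m_J$, contradicting $M\notin\m_J$, which follows from $J\notin\m_M$ via $(p)$(ii)) and $N_J=N$ (otherwise $L,J,N$ is self-polar; combined with $L\|_y N$ from the symmetric form of Theorem \ref{thm LMN}, every line through $y$ meeting $L$ must also meet $J$, giving $\Pi_y(L)\subseteq\Pi_y(J)$ and hence $L\|_y J$, contradicting the previous step). In the surviving case $N_J\in\m_L\setminus\{L,M,N,J\}$ and I would invoke condition (II) at a second point $p'\in L\setminus\{p\}$ to manufacture a line in the class $\arc L^y$ through $p'$; combining this new line with $K$ and $p'y$ produces four lines meeting in six distinct points, contradicting (I).

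For part (2), part (1) already sends $J\mapsto\arc J$ into the set of $\C^y$-circles tangent to $\arc L$. Injectivity uses the same template: if $J_1\neq J_2$ in $\m_L\setminus\{L,M,N\}$ satisfied $\arc{J_1}=\arc{J_2}$, then $J_1\|_y J_2$, and running the preceding argument with the self-polar triangle $L,J_1,N_{J_1}$ (from Lemma \ref{lemma polartri}) in place of $L,M,N$ forces $J_2\in\{L,J_1,N_{J_1}\}$, each of which is excluded exactly as above. Surjectivity then follows by matching cardinalities in the derivation of $\I(y)$ at $\infty_y$ (an affine plane of order $n$), noting that any $\C^y$-circle tangent to $\arc L$ meets it only at $\infty_y$ and so corresponds to a line of the parallel class of $\arc L$ in that derivation.

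The hardest step will be producing the four-line O'Nan configuration at the end of part (1). The obstacle is that $L,J,N$ are pairwise disjoint (they all lie in the spread $\m_L$), so no four of the named objects can pairwise intersect in six distinct points. Condition (II) must be used to manufacture the missing fourth line as a $y$-parallel to $L$ (or to $J$) through a carefully chosen point of $K\setminus\{y\}$, and verifying that all six pairwise intersections are genuinely distinct — and none collapse onto $p,q,y,r$ — is where the geometry of $\U$ enters non-trivially.
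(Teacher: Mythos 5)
There is a genuine gap, and it originates in your very first sentence: circles of type $\C^y$ do \emph{not} contain $\infty_y$. (The sentence in Section~2 asserting that $\infty_x$ lies on all circles of $\C^x$ and none of $\C_x$ has the two classes swapped: a circle $\arc M\in\C^y$ is the set of the $n+1$ lines through $y$ meeting $M$, which already has the correct cardinality $n+1$ for a circle of an inversive plane of order $n$; and everywhere else the paper treats the circles through $\infty_y$ as exactly those of type $\C_y$ --- see the circle ``determined by $M$, $N$ and $\infty_y$'' in Lemma \ref{lemma FMN}, and the flocks $\F(L,\infty_x)$, whose member circles must avoid the carrier $\infty_x$ yet include circles $\arc{L_i}$ of type $\C^x$.) Consequently, tangency of $\arc L$ and $\arc J$ does not mean ``no common point besides $\infty_y$''; it means there is \emph{exactly one} line of $\U$ through $y$ meeting both $L$ and $J$. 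Your part (1) therefore aims at the wrong target: after correctly showing $\arc L\neq\arc J$, you try to derive a contradiction from the existence of a line $K$ through $y$ meeting $L$ and $J$. But such a $K$ always exists: since $J\in\s{L}$ and $L\in\s{M}$, Lemma \ref{lemma LJzparallel} yields a point $x\in L$ with $M\|_x J$, and then $K=x.y$ meets $L$, $M$ and $J$. So no contradiction can be extracted; if your O'Nan construction ``worked'' it would prove $\arc L$ and $\arc J$ disjoint, i.e.\ not tangent. You yourself flag this as the step you have not carried out, and it cannot be carried out. The correct shape of the argument (the paper's) is: exhibit the common point $K$ as above, then prove uniqueness by supposing a second common line $K'$ through $y$ and building an O'Nan configuration from $K$, $K'$, $N$ and $x.(J\cap K')$, using that both $K$ and $K'$ meet $N$ by Theorem \ref{thm LMN} and that $M\|_x N$ together with $M\|_x J$ forces $J\|_x N$.

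Part (2) inherits the same defect. Your surjectivity count identifies the $\C^y$-circles tangent to $\arc L$ with a parallel class in the derived affine plane at $\infty_y$, but $\arc L$ does not pass through $\infty_y$ and so is not a line of that derived plane; the claim that a tangent $\C^y$-circle ``meets $\arc L$ only at $\infty_y$'' is false. The correct count is $(n+1)(n-1)-(n+1)=(n+1)(n-2)$ circles of type $\C^y$ tangent to $\arc L$ (all tangent circles minus those through $\infty_y$), which matches $|\m_L\setminus\{L,M,N\}|$; after that, only injectivity of $J\mapsto\arc J$ on $\m_L\setminus\{L,M,N\}$ remains, and it is proved by another O'Nan argument using that $\m_L$ is a special spread, not by the part-(1) template you propose.
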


\begin{proof}
(1) Let $J \in \m_L \setminus \{L,M,N\}$.
It suffices to show that there is a unique line of $\U$ through $y$ meeting both $J$ and $L$.
By Lemma \ref{lemma LJzparallel}, since $L \in \s{M}$ and $J\in \s{L}$, we have $M\|_x J$ for some point $x \in L$.
Let $K$ be the line of $\U$ through $x$ and $y$. Then $K$ meets both $J$ and $L$.

Suppose there is a line $K' \neq K$ through $y$ meeting both $J$ and $L$.
Let $x'$ be the intersection of $K'$ and $L$.
By Theorem \ref{thm LMN}, $K'$ meets $N$, and $K$ meets $N$.
By Theorem \ref{thm LMN}, $M \|_{x} N$. Since $M \|_{x} N$ and $M\|_x J$, we have $J\|_{x}N $ and so the line $x.(J\cap K')$ meets $N$.
($J\cap K'$ denotes the intersection point of $J$ and $K'$.)
Then the four lines $K$, $K'$, $N$, $x.(J\cap K')$ form an O'Nan configuration.
This contradicts (I).

(2) $|\m_L \setminus \{L,M,N\}| = (n^2-n+1)-3 =(n + 1)(n - 2)$. This is also the number of circles of type $\C^y$ tangent to $\arc L$ in $\I(y)$. Since we have \eqref{itemSLinIy1}, to show \eqref{itemSLinIy2}, it suffices to show $\arc {J_1} \neq \arc {J_2}$ for any distinct $J_1,J_2 \in \m_L \setminus \{L,M,N\}$. Suppose not.
Then $J_1 \|_y J_2$. By (1), there is a line $K''$ through $y$ meeting $J_1,J_2,L$.
Let $x''$ be the intersection of $K''$ and $L$.
Since $J_1,J_2 \in \s{L}$ and $\m_L$ is a special spread, we have $J_1 \|_{x''} J_2$ by the definition of special spread.
Let $K'''$ be the line through $y$ and a point of $J_2$ not on $K$.
Since $J_1 \|_y J_2$ and $J_1 \|_{x''} J_2$, the four lines $K''$, $K'''$, $J_1$, $x''.(J_3\cap K''')$ form an O'Nan configuration. A contradiction.
\end{proof}

Using Lemma \ref{lemma SLinIy} and the characterization of flocks in terms of bundles in even order inversive planes by Dembowski and Hughes \cite{DH} (see also (6.2.11), (6.2.12) with footnote on p.267, and (6.2.13), of \cite {Dem2}), we deduce Lemma \ref{lemma FMN}. Lemma \ref{lemma FMN} describes when lines of $\s{L}$ meet two intersecting lines which do not belong to $\s{L}$ and which miss $L$.

\begin{lemma}\label{lemma FMN}
Let $L$ be a line of $\U$.
Let $M,N$ be distinct lines not in $\m_{L}$.
Suppose $M$ and $N$ intersect at a point $y$ of $\U$.
If $\arc{L}$ belongs to $\F(M,N)$ in $\I(y)$, then the following statements hold:
\begin{enumerate}
 \item\label{itemFMN2} Let $L_1$ be the line in $\s{L}$ through $y$. In $\I(y)$, $L_1$ is a point on the circle $C$ determined by $M$, $N$ and $\infty_y$.
 \item\label{itemFMN1} In $\U$, every line in $\s{L}$ meeting $M$ meets $N$ as well.
 \item\label{itemFMN4} In $\U$, there is a unique point $x$ on $L$ such that any line from $x$ meeting $L_1$ misses all other lines in $\s{L}$ that meet both $M$ and $N$. Furthermore, in $\I(y)$, the unital line $x.y$ through $x$ and $y$ is a point on $C$.
\end{enumerate}
\end{lemma}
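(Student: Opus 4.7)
The plan is to set up the self-polar structure at $y$, then prove \eqref{itemFMN1}, \eqref{itemFMN2}, \eqref{itemFMN4} in that order, using Lemma \ref{lemma SLinIy} and the Dembowski--Hughes characterization of flocks in even-order inversive planes. First I would let $L_1$ denote the line of $\m_L$ through $y$ and let $N_1$ be the third line of the self-polar triangle with $L,L_1$ (Lemma \ref{lemma polartri}). Applying Theorem \ref{thm LMN} to this triangle yields $L\|_y N_1$, so $\arc{L}=\arc{N_1}$ as circles of $\I(y)$ and $N_1$ misses $y$. Since $\arc{L}\in\F(M,N)$ has carriers $M,N$, the points $M,N$ are not on $\arc{L}=\arc{N_1}$; this translates to: neither $M$ nor $N$ meets $L$, and neither meets $N_1$.

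For \eqref{itemFMN1} I would take $J\in\s{L}$ meeting $M$ and split into cases. The case $J=L_1$ is immediate ($y\in L_1\cap M\cap N$), and $J=N_1$ is vacuous since $M\notin\arc{L}=\arc{N_1}$. For $J\in\s{L}\setminus\{L_1,N_1\}$, Lemma \ref{lemma SLinIy} gives a unique point $p_J\in\arc{L}$ where $\arc{J}$ is tangent to $\arc{L}$, and the hypothesis yields $M\in\arc{J}$. Let $P$ be the unique circle of $\I(y)$ through $M,N,p_J$. By the Dembowski--Hughes characterization \cite{DH} (see also \cite[(6.2.13)]{Dem2}), every circle through the carriers of a flock meets each flock circle in at most one point; hence $P\cap\arc{L}=\{p_J\}$, so $P$ is tangent to $\arc{L}$ at $p_J$. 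Then $P$ and $\arc{J}$ are both circles through $M$ parallel to $\arc{L}$ in the internal affine plane at $p_J$ (cf.\ \cite[(6.2.12)]{Dem2}), so they coincide by the affine axiom; hence $N\in P=\arc{J}$, giving $J\cap N\ne\emptyset$.

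For \eqref{itemFMN2} I would count: since $M$ misses $L$, each of its $n+1$ points lies on a unique line of $\s{L}$, so exactly $n+1$ lines of $\s{L}$ meet $M$, and by \eqref{itemFMN1} these all meet $N$ as well. List them as $L_1, J_1,\ldots,J_n$ (these are distinct and lie in $\s{L}\setminus\{N_1\}$ because $N_1$ does not meet $M$). Each $\arc{J_i}$ is a circle through $M,N$ tangent to $\arc{L}$ at some point $p_i$, and distinct $J_i$'s give distinct $p_i$'s (else two pencil circles through $M,N$ would be tangent to $\arc{L}$ at the same point, forcing them to coincide). The pencil of $n+1$ circles through $M,N$ in $\I(y)$ is thus $\{C,\arc{J_1},\ldots,\arc{J_n}\}$. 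Since $L_1$ is disjoint from each $J_i$ by the spread property, $L_1\notin\arc{J_i}$ for any $i$; as $L_1\notin\{M,N\}$ lies on exactly one pencil circle, that circle must be $C$.

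For \eqref{itemFMN4} the tangency points $p_1,\ldots,p_n$ occupy $n$ of the $n+1$ points of $\arc{L}$; the Dembowski--Hughes argument applied to the pencil circle $C$ forces $C$ to be tangent to $\arc{L}$ at the remaining point $p_0$. Writing $p_0=x.y$ for a unique $x\in L$ gives $x.y\in C$. To verify the missing property, if a line $x.q$ with $q\in L_1$ met some $J_i$, then the special-spread partition at $x$ would place $L_1$ and $J_i$ in the same class, so $x.y$ (which meets $L_1$ at $y$) would meet $J_i$, forcing $p_0\in\arc{J_i}$ and contradicting $\arc{J_i}\cap\arc{L}=\{p_i\}\ne\{p_0\}$. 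Uniqueness is the mirror image: for each $i$, writing $p_i=x_i.y$, the line $x_i.y$ meets $J_i$ because $p_i\in\arc{J_i}$, so $J_i$ and $L_1$ lie in the common $\|_{x_i}$-class and lines from $x_i$ to $L_1$ do meet $J_i$. The chief obstacle throughout is the precise invocation of the Dembowski--Hughes characterization to translate the flock/pencil structure into the tangencies used above.
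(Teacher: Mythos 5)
Your proof is correct and follows essentially the same route as the paper: both rest on the Dembowski--Hughes identification of $\F(M,N)$ as the circles tangent to every circle of the bundle $\B(M,N)$, combined with Lemma \ref{lemma SLinIy} to recognize the $n$ bundle circles of type $\C^y$ as $\arc{J}$'s for lines $J\in\s{L}$, and with the spread/special-spread properties for parts \eqref{itemFMN2} and \eqref{itemFMN4}. The only difference is organizational: you verify the bundle membership one line at a time via the tangent-point/parallel-axiom argument, whereas the paper identifies the whole bundle $\B(M,N)=\{\arc{L_2},\dots,\arc{L_{n+1}},C\}$ in one step.
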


\begin{figure}[!ht]
\centering
 \includegraphics[height = 4cm]{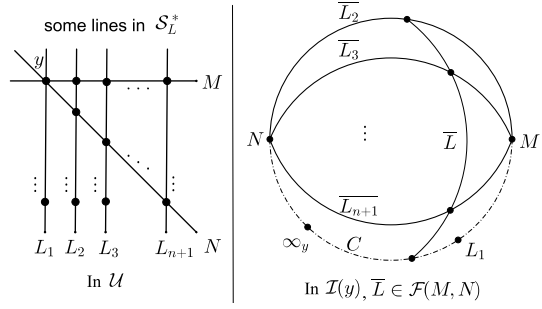}
 \caption{illustration of Lemma \ref{lemma FMN}}
\end{figure}

\begin{proof}
By Dembowski and Hughes \cite{DH}, in $\I(y)$, the flock $\F(M,N)$ is the set of circles tangent to every circle in the bundle $\B(M,N)$, i.e. the set of circles in $\I(y)$ through the points $M$ and $N$. Suppose $\arc{L}$ is in $\F(M,N)$ in $\I(y)$. Then $\arc{L}$ is tangent to every circle in $\B(M,N)$. By Lemma \ref{lemma SLinIy}, $\B(M,N) = \{\arc{L_2},\arc{L_3},\cdots,\arc{L_{n+1}},C \}$ for some distinct lines $L_2,L_3,\cdots,L_{n+1} \in \s{L}$. Since $\s{L}$ is a spread, $L_1$ does not meet $L_2,L_3,\cdots, L_{n+1}$. Hence in $\I(y)$, $L_1$ is not on $\arc{L_2},\arc{L_3},\cdots,\arc{L_{n+1}}$, but on the remaining circle $C$ of $\B(M,N)$. This proves \eqref{itemFMN2}. The lines in $\s{L}$ meeting $M$ are $L_1,L_2,\cdots, L_{n+1}$, and they meet $N$. This proves \eqref{itemFMN1}.

Let $K$ be the unital line on $y$ such that $K$ is the intersection point of $C$ and $\arc{L}$ in $\I(y)$.
Since $K$ is on $\arc{L}$ in $\I(y)$, $K$ meets $L$ at a point, say $x$, in $\U$.
Since $K$ is not on $\arc{L_2},\arc{L_3},\cdots,\arc{L_{n+1}}$ in $\I(y)$, $K$ does not meet $L_2,L_3,\cdots, L_{n+1}$ in $\U$.
Since $\s{L}$ is a special spread and $K$ is a line from $x$ meeting $L_1\in \s{L}$ but not $L_2,L_3,\cdots, L_{n+1}\in \s{L}$, $x$ is a point on $L$ satisfying the property in \eqref{itemFMN4}. Uniqueness of $x$ follows from that fact that in $\I(y)$, $K$ is the unique point on $\arc{L}$ not on $\arc{L_2},\arc{L_3},\cdots,\arc{L_{n+1}}$. This proves \eqref{itemFMN4}. 
\end{proof}

We introduce the notion of {\it triply ruled set} for a general unital: in a unital of order $m$, a set of $(m + 1)^2$ points is {\it triply ruled} if there are three partitions of the $(m + 1)^2$ points by lines. The following lemma suggests a method to find a triply ruled set.

\begin{lemma}\label{lemma parallel class}
Let $L$ and $M_1$ be disjoint lines of $\U$ with $M_1 \notin \m_L$.
Let $L_1$, $L_2$, $\cdots$, $L_{n + 1}$ be the lines of $\s{L}$ meeting $M_1$.
Then there are lines $M_2$, $M_3$, $\cdots$, $M_{n + 1}$, $N_1$, $N_2$, $\cdots$, $N_{n + 1} $ such that $\{ M_1,M_2,\cdots,M_{n + 1}\}$ and $\{ N_1,N_2,\cdots,N_{n + 1}\}$ are partitions of
the set of points covered by the disjoint lines $L_1,L_2,\cdots,L_{n + 1}$.
\end{lemma}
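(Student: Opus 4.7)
The plan is to construct the third family $\{N_1,\ldots,N_{n+1}\}$ as the ``flock-partners'' of $M_1$ at the $n+1$ points of $M_1$, and then obtain $\{M_1,M_2,\ldots,M_{n+1}\}$ by repeating the same construction with $N_1$ in place of $M_1$. The main tools are Lemma~\ref{lemma FMN} and the Dembowski-Hughes characterization of flocks in even order inversive planes.

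For each $y\in M_1$, consider $\I(y)$: here $M_1$ is a point and $\arc L$ is a circle of type $\C^y$ not containing $M_1$ (since $L\cap M_1=\emptyset$). By Dembowski-Hughes, the tangent circles to $\arc L$ through $M_1$ share a second common point $q_y$, so $\F(M_1,q_y)\ni\arc L$. Since $\F(M_1,\infty_y)$ consists of circles of type $\C_y$ while $\arc L\in\C^y$, we have $q_y\neq\infty_y$, hence $q_y$ is a line $N_y$ of $\U$ through $y$. To invoke Lemma~\ref{lemma FMN} I verify $N_y\notin\m_L$: clearly $N_y\neq L$ since $y\notin L$, and if $N_y$ lay in $\s{L}$ it would be the unique $\s{L}$-line through $y$, call it $L_{i(y)}\in\{L_1,\ldots,L_{n+1}\}$. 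But by Lemma~\ref{lemma SLinIy} the type-$\C^y$ tangent circles to $\arc L$ are the $\arc J$'s for $J\in\m_L\setminus\{L,L_{i(y)},N_{\mathrm{self}}\}$, and the point $L_{i(y)}\in\I(y)$ lies on none of these since distinct $\s{L}$-lines are pairwise disjoint; thus $N_y\neq L_{i(y)}$. Lemma~\ref{lemma FMN} then gives that $N_y$ meets every $\s{L}$-line meeting $M_1$, hence meets each of $L_1,\ldots,L_{n+1}$.

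Label the points of $M_1$ as $y_1,\ldots,y_{n+1}$ and set $N_k:=N_{y_k}$. To show $\{N_1,\ldots,N_{n+1}\}$ is a partition of $\bigcup_i L_i$, I argue the $N_k$'s are pairwise disjoint. Suppose $N_k\cap N_{k'}=\{z\}$ with $k\neq k'$: then $z\notin M_1$ (else $z=y_k=y_{k'}$), so $z\in L_i$ for some $i$. Since $n\geq 4$, there exists $j\notin\{i,i(y_k),i(y_{k'})\}$. A direct check shows that the four lines $M_1,N_k,N_{k'},L_j$ meet pairwise in six distinct points, namely $y_k$, $y_{k'}$, $w_j=M_1\cap L_j$, $z$, $N_k\cap L_j$, and $N_{k'}\cap L_j$, yielding an O'Nan configuration and contradicting (I). Hence the $N_k$'s are disjoint and, by cardinality, partition $\bigcup_i L_i$.

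Finally, observe that $N_1\cap L=\emptyset$ (the $n+1$ points of $N_1$ all lie on the $L_i$'s, which are disjoint from $L$) and $N_1\notin\m_L$. Applying the construction above with $N_1$ in place of $M_1$ produces a second partition of $\bigcup_i L_i$ by flock-partners of $N_1$; at the point $M_1\cap N_1$, the flock-partner of $N_1$ is $M_1$ itself (by symmetry of the flock relation and uniqueness of $q$), so this second partition is the desired $\{M_1,M_2,\ldots,M_{n+1}\}$. The main obstacle is the O'Nan argument in the third paragraph, which crucially uses condition (I) together with the hypothesis $n\geq 4$ (needed to ensure the fourth line $L_j$ can be chosen outside three forbidden indices).
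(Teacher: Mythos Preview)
Your overall strategy matches the paper's: at a point $y\in M_1$ find a ``flock partner'' $N_y$ of $M_1$ with respect to $\arc L$ in $\I(y)$, apply Lemma~\ref{lemma FMN} to see it meets all the $L_i$, iterate, and use an O'Nan argument for disjointness. Your organisation differs slightly (you run over the points of $M_1$ to build the $N_k$'s first and then over the points of $N_1$ to build the $M_i$'s, whereas the paper iterates from the points of $N_1$ to obtain both families simultaneously), but this is immaterial, and your explicit check that $N_y\notin\m_L$ before invoking Lemma~\ref{lemma FMN} is a detail the paper leaves implicit.

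There is, however, a genuine error in your justification that $q_y\neq\infty_y$. You claim that $\F(M_1,\infty_y)$ consists of circles of type $\C_y$, but this is false: the circles in a flock miss both carriers, so every circle of $\F(M_1,\infty_y)$ misses $\infty_y$ and is therefore of type $\C^y$, the \emph{same} type as $\arc L$. Hence your argument does not exclude $\arc L\in\F(M_1,\infty_y)$. The correct step (and the one the paper uses) invokes the hypothesis $M_1\notin\m_L$: by condition ($p$)(ii) this gives $L\notin\m_{M_1}$, and by the explicit construction of $\m_{M_1}$ via the flock $\F(M_1,\infty_y)$ in $\I(y)$, this is precisely the statement $\arc L\notin\F(M_1,\infty_y)$. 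Notice that your version of this step never uses $M_1\notin\m_L$ at all, which is a warning sign: if $M_1\in\s{L}$ then $L\in\s{M_1}$, whence $\arc L\in\F(M_1,\infty_y)$ and $q_y=\infty_y$ actually occurs.
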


\begin{proof}
Let $y$ be a point on $M_1$.
Since $M_1$ and $L$ are disjoint, the point $M_1$ is not incident with $\arc{L}$ in $\I(y)$.
Since $M_1 \notin \s{L}$, condition ($p$) implies $L \notin \s{M_1}$ and hence $\arc{L} \notin \F(M_1,\infty_y)$.
Thus, there is a unital line $N_1$ through $y$ such that $\arc{L} \in \F(M_1,N_1)$.
By Lemma \ref{lemma FMN}, $L_1,L_2,\cdots,L_{n + 1}$ meet $N_1$.
For $i=2,3,\cdots,n+1$, let $y_i$ be the intersection point of $N_1$ and $L_i$.
By a similar argument, for each $i=2,3,\cdots,n+1$, there is a unital line $M_i$ through $y_i$ such that $\arc{L} \in \F(M_i,N_1)$,
and there is a unital line $N_i$ through $y_i$ such that $\arc{L} \in \F(M_i,N_i)$.
By Lemma \ref{lemma FMN}, $L_1,L_2,\cdots,L_{n + 1}$ meet $M_i,N_i$, for all $i=2,3,\cdots,n+1$.

We are going to show that $M_1,M_2,\cdots,M_{n + 1}$ are mutually disjoint. Suppose $M_{i_1}$ and $M_{i_2}$ intersect at a point $z$ for some distinct $i_1,i_2 \in \{1,2,\cdots,n + 1\}$. Then $z$ is on $L_{i_3}$ for some $i_3 \in \{1,2,\cdots,n + 1\} \setminus \{i_1,i_2\}$. Let $i_4 \in \{1,2,\cdots,n + 1\} \setminus \{i_1,i_2,i_3\}$. Then the four lines $M_{i_1}$, $M_{i_2}$, $N_1$, $L_{i_4}$ form an O'Nan configuration. A contradiction.
Hence $\{ M_1,M_2,\cdots,M_{n + 1}\}$ is a partition of
the set of points covered by $L_1,L_2,\cdots,L_{n + 1}$.
By a similar argument, the same conclusion can be drawn for $N_i$'s.
\end{proof}

Note that there cannot be a forth partition of the set of points covered by $L_1$, $L_2$, $\cdots$, $L_{n + 1}$; otherwise, there would be an O'Nan configuration constituted by lines of different partitions. This suggests the following notion of parallelism:

\begin{itemize}
\item[]Let $M$, $N$ be lines missing $L$ and not in $\m_{L}$. We say that $M$ and $N$ are {\it $L$-parallel}, denoted by $M \|_{L} N$, if they are identical or they are non-intersecting and meet the same lines in $\s{L}$. We say that $M$ and $N$ are {\it $L$-non-parallel} if they intersect and meet the same lines in $\s{L}$.
\end{itemize}

\begin{lemma}\label{lemma equiv relation}
Let $L$ be a line of $\U$. Then $\|_L$ defines an equivalence relation in the set of lines missing $L$ and not in $\m_{L}$. Each class has $n + 1$ lines. There are $n(n - 1)(n - 2)$ equivalence classes under the equivalence relation $\|_L$.
\end{lemma}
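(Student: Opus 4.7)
Reflexivity and symmetry of $\|_L$ are built into its definition, so the real work is transitivity and the class size. I plan to prove the stronger statement: for every line $M_1$ missing $L$ and not in $\m_L$, the $\|_L$-equivalence class of $M_1$ equals the $(n+1)$-element family $\{M_1,\ldots,M_{n+1}\}$ produced by Lemma~\ref{lemma parallel class}. This gives transitivity and class size simultaneously, after which the number of classes follows from a standard incidence count.

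\textbf{Key step.} Fix $M_1$ and let $L_1,\ldots,L_{n+1}\in\s{L}$ together with the two partitions $\{M_1,\ldots,M_{n+1}\}$ and $\{N_1,\ldots,N_{n+1}\}$ of the $(n+1)^2$ points of $L_1\cup\cdots\cup L_{n+1}$ be those furnished by Lemma~\ref{lemma parallel class}. By the construction via Lemma~\ref{lemma FMN}, each $M_i$ and each $N_j$ misses $L$ and is not in $\m_L$; since such a line carries exactly one point from each $L_k$ and $\m_L$ is a spread, its meeting pattern with $\s{L}$ is exactly $\{L_1,\ldots,L_{n+1}\}$. Hence the $M_i$'s are pairwise disjoint and meet identical lines of $\s{L}$, making them pairwise $L$-parallel. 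Now suppose $M'\|_L M_1$ with $M'\neq M_1$. Then $M'\cap M_1=\emptyset$ and $M'$ meets exactly $L_1,\ldots,L_{n+1}$ in $\s{L}$, so the $n+1$ points of $M'$ lie in the same $(n+1)^2$-point set. Applying Lemma~\ref{lemma parallel class} to $M'$ produces a partition of that set into $n+1$ pairwise disjoint lines including $M'$; by the remark following Lemma~\ref{lemma parallel class}, the only such partitions are $\{L_k\}$, $\{M_i\}$, $\{N_j\}$, and since $M'\notin\m_L$ it must lie in $\{M_i\}$ or $\{N_j\}$. However, every $N_j$ meets $M_1$ in exactly one point (the $n+1$ points of $M_1$ lie in the set, each lies on a unique $N_j$, each $N_j$ shares at most one point with $M_1$, and counting forces every intersection to be nonempty), ruling out $M'\in\{N_j\}$. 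Therefore $M'\in\{M_2,\ldots,M_{n+1}\}$.

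\textbf{Transitivity, count, and main obstacle.} Transitivity is now immediate: if $M\|_L N$ and $N\|_L P$, the key step applied with $N$ in the role of $M_1$ places both $M$ and $P$ in one family of pairwise disjoint lines meeting the same lines of $\s{L}$, so $M\|_L P$. Each class therefore has exactly $n+1$ elements. For the count, $\U$ has $n^2(n^2-n+1)$ lines, of which $(n+1)(n^2-1)+1=n^3+n^2-n$ meet $L$, leaving $n(n-1)(n^2-n-1)$ lines missing $L$; subtracting the $|\s{L}|=n^2-n$ lines of $\m_L\setminus\{L\}$ yields $n(n-1)(n-2)(n+1)$ lines partitioned into classes of size $n+1$, hence exactly $n(n-1)(n-2)$ classes. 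The principal obstacle is the invocation of the ``no fourth partition'' remark in the key step, which in turn rests on Wilbrink's O'Nan-free hypothesis~(I).
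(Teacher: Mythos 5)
Your proof is correct and follows essentially the same route as the paper: both identify each $\|_L$-class with the $(n+1)$-line family supplied by Lemma~\ref{lemma parallel class} and then finish with the same incidence count. The only real difference is local: where you route through the ``no fourth partition'' note, the paper's terser argument uses the direct pigeonhole that two points determine a line (a line $M'\|_L M_1$ has its $n+1$ points spread over the $n$ lines $M_2,\dots,M_{n+1}$, forcing $M'$ to coincide with one of them), which bypasses the note entirely.
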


\begin{proof}
It is clear that $\|_{L}$ is reflexive and symmetric. It is transitive because of Lemma 3.3 and the fact that two points determine a line. Since non-equal $L$-parallel line are non-intersecting, there are at most $n + 1$ lines in an equivalence class. By Lemma \ref{lemma parallel class}, this upper bound is achieved. The number of lines missing $L$ is $n(n - 1)(n^2-n - 1)$. Among these lines, $n(n - 1)$ are in $\s{L}$. The result follows from simple counting.
\end{proof}

By Lemma \ref{lemma FMN}, for any distinct lines $M'$ and $N'$ through $y$, $N'$ is $L$-non-parallel to $M'$ if $\arc{L} \in \F(M',N')$ in $\I(y)$. Furthermore, the set of lines which are $L$-non-parallel to $M'$ is an $L$-parallel class.
In the context of Lemma \ref{lemma parallel class}, it is natural to ask whether $M_1,M_2,\cdots,M_{n + 1}$ are in $\s{M}$ for some line $M$. The answer is yes:

\begin{lemma}\label{lemma concurrent}
Refer to the set-up in Lemma \ref{lemma parallel class}.
Let $M$ be the line in $\s{L}$ such that $M_1\in \s{M}$.
Then for each $i=2,3,\cdots,n+1$, $M_i$ belongs to $\s{M}$.
\end{lemma}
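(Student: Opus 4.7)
The plan is to apply Lemma~\ref{lemma LJzparallel}: since $M\in\s{L}$, we have $\s{M}=\{J : L\|_{y}J\text{ for some }y\in M\}$, so it is enough to produce, for each $i\ge 2$, a point $y_i\in M$ with $L\|_{y_i}M_i$. The hypothesis $M_1\in\s{M}$ already supplies such a point $y_1\in M$ for $i=1$; in the inversive plane $\I(y_1)$ this translates to $\arc{L}^{y_1}=\arc{M_1}^{y_1}$, and by the construction of $\s{M}$ recalled in Section~\ref{st 2} this common circle belongs to the flock $\F(M,\infty_{y_1})$.

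The key step is to show that for each $i\ge 2$, the circle $\arc{M_i}^{y_1}$ likewise lies in $\F(M,\infty_{y_1})$, since this immediately gives $M_i\in\s{M}$ via the construction of $\s{M}$. By the Dembowski--Hughes characterization of flocks in even-order inversive planes (invoked just before Lemma~\ref{lemma FMN}), this membership is equivalent to $\arc{M_i}^{y_1}$ being tangent to every circle of the bundle $\B(M,\infty_{y_1})$. Concretely, for each line $J$ meeting $M$ but missing $y_1$, one must check that exactly one line through $y_1$ meets both $M_i$ and $J$. I would verify this using three ingredients: Lemma~\ref{lemma SLinIy} applied to $L$ at $y_1$, which describes the $\C^{y_1}$-circles tangent to $\arc{L}^{y_1}=\arc{M_1}^{y_1}$ in terms of the spread $\m_L$; the $L$-parallelism $M_i\|_L M_1$, forcing $M_i$ to meet exactly the same lines $L_1,\dots,L_{n+1}$ of $\s{L}$ as $M_1$ does; and Wilbrink's condition (I), used to exclude two distinct lines through $y_1$ meeting both $M_i$ and $J$, since any such pair together with $M_i$ and $J$ would realize an O'Nan configuration of four lines meeting in exactly six points.

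The main obstacle lies in this last tangency check: translating the $L$-parallelism $M_i\|_L M_1$ into precise information about which lines through $y_1$ meet $M_i$, and then isolating, in each possible failure, four lines that exhibit the six distinct unital points needed to trigger condition (I). The self-polar triangle structure of $(L,M,N)$ supplied by Theorem~\ref{thm LMN}, together with the tangency catalogue of Lemma~\ref{lemma SLinIy}, provide the geometric data needed both to verify the single tangency for each bundle circle and to rule out the double meetings.
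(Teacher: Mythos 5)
Your opening reduction is correct and is exactly where the paper starts: by Lemma \ref{lemma LJzparallel} it suffices to produce, for each $i$, a point of $M$ at which $M_i$ is parallel to $L$, or equivalently to show that $\arc{M_i}$ is a circle of $\F(M,\infty_{y_1})$ in $\I(y_1)$. However, the one concrete step you then write down rests on a misidentification of the bundle. Every circle of $\B(M,\infty_{y_1})$ passes through $\infty_{y_1}$ and is therefore of type $\C_{y_1}$, i.e.\ of the form $C_{y_1}(M,K)\cup\{\infty_{y_1}\}$; none of them is a circle $\arc{J}$ with $J$ a line meeting $M$ and missing $y_1$, since such a circle already consists of the $n+1$ lines through $y_1$ meeting $J$ and so cannot also contain $\infty_{y_1}$ (compare the proof of Lemma \ref{lemma FMN}, where a bundle determined by two \emph{finite} points contains exactly one circle through $\infty_y$). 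So ``exactly one line through $y_1$ meets both $M_i$ and $J$'' is not the tangency condition that needs checking, and the verification you outline is aimed at the wrong target.

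The deeper problem is that the ``main obstacle'' you flag is the entire content of the lemma, and the proposal supplies no argument for it. The paper does not verify tangency to a bundle at $y_1$ at all; instead, for each point $z_k$ of $M$ it constructs a transversal of $L_1,\dots,L_{n+1}$ that is $z_k$-parallel to $L$: the points $M_1\cap L_i$ are joined to $z_1$ to mark points $x_i$ on $L$, the lines $K_i'=z_k.x_i$ then cut $L_i$ in candidate points $w_i$ (using that $\m_L$ is special, so $M\|_{x_i}L_i$), and when the $w_i$ fail to be collinear a pigeonhole count on the $\|_{z_k}$-class of $L$ produces a repeated line $J_1=J_2$, which is forced to meet every $L_j$ by comparing the two bundles $\B(M_2,N_2)$ and $\B(K_1',J_1)$ in $\I(w_1)$ and excluding two O'Nan configurations. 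The resulting $n+1$ lines are pairwise distinct and disjoint, hence constitute the $L$-parallel class $\{M_1,\dots,M_{n+1}\}$, and each lies in $\s{M}$ by Lemma \ref{lemma LJzparallel}. Some construction of this kind is unavoidable: knowing only that $M_i$ meets $L_1,\dots,L_{n+1}$ and invoking condition (I) ``in each possible failure'' does not by itself exhibit the required four lines and six points, so as written the proposal restates the goal rather than proving it.
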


\begin{proof}
By Lemma \ref{lemma LJzparallel}, there is a point $z_1\in M$ such that $L \|_{z_1} M_1$.
Let $z_2,z_3,\cdots,z_{n + 1}$ be the points on $M$ other than $z_1$. Let $M_{z_1} = M_1$.
Let $N$ be a line such that $L,M,N$ form a self-polar triangle.
For $i = 2,3,\cdots, n + 1$, we claim that
{\it there is a line $M_{z_i} \in \s{M}$ distinct from $L$ and $N$, such that $M_{z_i}$ is $z_i$-parallel to $L$, and $M_{z_i}$ meets $L_1,L_2,\cdots,L_{n + 1}$.}
It the claim is true, then $\{M_{z_i} | z_i \in M \}$ is the $L$-parallel class containing $M_{z_1}$.
Indeed, if $M_{z_i} = M_{z_j}$ for some $i\neq j$, then $M_{z_i}$, $L$ and $N$ would be three lines both $z_i$-parallel and $z_j$-parallel, giving an O'Nan configuration.
Hence $M_{z_i} \neq M_{z_j}$ for distinct $i,j$. Furthermore, for $i = 2,3,\cdots,n + 1$, since $M_{z_i}$ is $z_i$-parallel to $L$, $M_{z_i}$ is in $\s{M}$ by Lemma \ref{lemma LJzparallel}. Thus $M_{z_i}$'s are non-intersecting. Hence $\{M_{z_i} | z_i \in M \}$ is the $L$-parallel class containing $M_{z_1}$.
By uniqueness of such a class, $\{M_1,M_2,\cdots, M_{n+1} \}=\{M_{z_i} | z_i \in M \}$.
The result follows.

We now prove the claim. Let $k \in \{2,3,\cdots, n + 1\}$. For $i = 1,2,\cdots,n + 1$, let $K_i = (M_1 \cap L_i).z_1$. Since $L \|_{z_1} M_1$, we can label the points on $L$ as $x_1,x_2,\cdots,x_{n + 1}$ such that $x_i \in K_i$. Since $\m_L$ is a special spread, and $K_i$ passes through $x_i \in L$ and meets both $M\in\s{L}$ and $L_i\in\s{L}$, we have $M \|_{x_i} L_i$.
For $i = 1,2,\cdots,n + 1$, let $K_i' = z_k.x_i$. Since $M \|_{x_i} L_i$ and $K_i'$ meets $M$, we conclude that $K_i'$ meets $L_i$, say at $w_i$. If $w_1,w_2,\cdots,w_{n + 1} $ are collinear, then take $M_{z_k}$ to be the line that they are on and we find $M_{z_k}$ (Figure \ref{lemma35}).

\begin{figure}[!ht]
 \centering
 \includegraphics[height = 4.5cm]{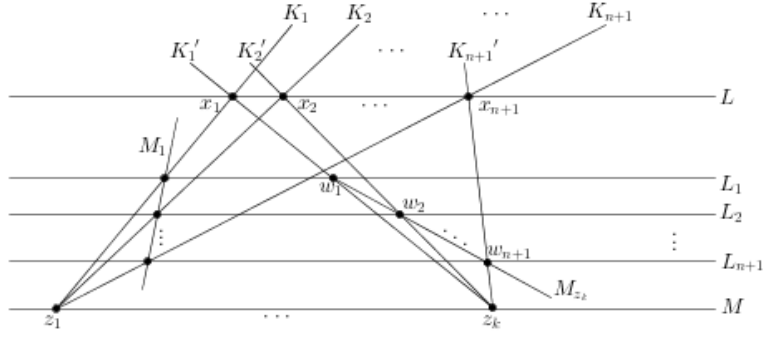}\label{lemma35}
\caption{$M_{z_k}$ is $L$-parallel to $M_1$}
\end{figure}

It remains the case when some of $w_1,w_2,\cdots,w_{n + 1}$ are not collinear. For $i = 1,2,\cdots,n + 1$, let $J_i$ be the line on $w_i$ that is $z_k$-parallel to $L$. At least two of $J_1,J_2,\cdots,J_{n + 1}$ are the same line. For, otherwise, there would be $n + 1$ lines $z_k$-parallel to $L$. Without loss of generality, assume $J_1 = J_2$.
We are going to prove that $J_1$ meets $L_1,L_2,\cdots,L_{n + 1}$ by contradiction.
Suppose $J_1$ misses some of $L_3,L_4,\cdots,L_{n + 1}$.
By Lemma \ref{lemma parallel class}, there are lines $M_2$ and $N_2$ on $w_1$ respectively $L$-parallel to $M_1$ and $L$-non-parallel to $M_1$.
Thus, in $\I(w_1)$, $\B(M_2,N_2) = \{\arc{L_2},\arc{L_3},\cdots,\arc{L_{n + 1}}, C\}$ for some circle $C$ of type $\C_{w_1}$.
Since $L_1$ are disjoint from $L_2,L_3,\cdots,L_{n+1}$ in $\U$, $L_1$ is a point on $C$ in $\I(w_1)$.
Since $J_1 = J_2$ meets $L_2$ in $\U$, $J_1$ is a point on $\arc{L_2}$ in $\I(w_1)$.
Since $J_1 \neq M_2$ and $J_1 \neq N_2$ by hypothesis, and $J_1\in \arc{L_2}$, we conclude $J_1 \notin C$.
On the other hand, since $J_1\|_{z_k} L$, $\B(K_1',J_1) = \{\arc{K_2'},\arc{K_3'},\cdots,\arc{K_{n + 1}'}, D\}$ for some circle $D$ of type $\C_{w_1}$.
Note that $L_1 \in D$. Indeed, if $L_1$ is not on $D$, then $L_1$ meets $K_i'$ in $\U$ for some $i \neq 1$.
By Theorem \ref{thm LMN}, $K_i'$ meets $N$.
Since $K_i'$ meets both $N \in \s{L}$ and $L_1\in \s{L}$, we have $N \|_{x_i}L_1$ because $\m_{L}$ is a special spread.
Thus the four lines $K_1'$, $K_i'$, $L_1$, $x_i. (K_1 \cap N)$ form an O'Nan configuration. Hence $L_1 \in D$.
Since $J_1\in D$ but $J_1\notin C$, we have $C\neq D$.
Since $\infty_{w_1}$ and $L_1$ are points on both $C$ and $D$, $K_1' \notin C$.
Then $K_1'$ meets $L_j$ in $\U$ for some $j \neq 1$.
By Theorem \ref{thm LMN}, $K_i'$ meets $N$.
Since $K_1'$ meets both $N\in \s{L}$ and $L_j \in \s{L}$, we have $N \|_{x_1} L_j$ and so the four lines $K_1'$, $K_j'$, $L_j$, $x_1. (K_j'\cap N)$ form an O'Nan configuration. A contradiction. Thus $J_1$ meets $L_1,L_2,\cdots,L_{n + 1}$. Take $M_{z_k}$ to be $J_1$ and we prove our claim.
\end{proof}

\begin{theorem}\label{thm bigP}
Let $L,M,N$ be a self-polar triangle with respect to $\U$. Then $\m_L \setminus \{L,M,N\}$, $\m_M \setminus \{L,M,N\}$, $\m_N \setminus \{L,M,N\}$ can be respectively partitioned into $n - 2$ subsets
$$\{L^1_{1},L^1_{2},\cdots,L^1_{n + 1}\}, \cdots, \{L^{n - 2}_{1},L^{n - 2}_{2}, \cdots, L^{n - 2}_{n + 1}\};$$ $$\{M^1_{1},M^1_{2},\cdots,M^1_{n + 1}\}, \cdots, \{M^{n - 2}_{1},M^{n - 2}_{2}, \cdots, M^{n - 2}_{n + 1}\};$$
$$\{N_1^1,N^1_{2}, \cdots, N^1_{n + 1}\}, \cdots, \{N^{n - 2}_{1},N^{n - 2}_{2}, \cdots, N^{n - 2}_{n + 1}\};$$
each of cardinality $n + 1$, such that for $i = 1,2,\cdots,n - 2$, the sets of points incident respectively on the lines of
$\{L^i_1,L^i_{2},\cdots,L^i_{n + 1}\}$, $\{M^i_{1},M^i_{2}$, $\cdots$, $M^i_{n + 1}\}$ and $\{N^i_{1},N^i_{2},\cdots,N^i_{n + 1}\}$ are the same.
\end{theorem}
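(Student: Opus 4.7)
The plan is to construct, for each $L_0\in\s{L}\setminus\{M,N\}$, a set $S(L_0)$ of $(n+1)^2$ points triply ruled by $n+1$ lines each from $\s{L}$, $\s{M}$, $\s{N}$, and to show that the $n-2$ such sets partition $\U\setminus(L\cup M\cup N)$.

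First, I apply Lemma \ref{lemma parallel class} to the disjoint pair $M$, $L_0$; the hypothesis $L_0\notin\m_M$ holds because the self-polar triangle condition gives $\m_L\cap\m_M=\{L,M,N\}$ while $L_0\in\m_L\setminus\{L,M,N\}$. This produces $n+1$ lines $A_1,\ldots,A_{n+1}$ of $\s{M}$ meeting $L_0$, the $M$-parallel class $B_1=L_0, B_2,\ldots,B_{n+1}$, and a third family $C_1,\ldots,C_{n+1}$, all three partitioning a set $S(L_0)$ of $(n+1)^2$ points. By Lemma \ref{lemma concurrent}, the $B_i$ lie in $\s{K}$ where $K$ is the unique line of $\s{M}$ with $L_0\in\s{K}$. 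I identify $K=L$ using the polar unital embedding recalled in Section \ref{st 2}: since two distinct points of $\pi$ determine a unique line, $|\s{L_0}\cap\s{M}|\le 1$, and because $L\in\s{L_0}\cap\s{M}$ from the self-polar triangle, $K=L$ and $B_i\in\s{L}$ for every $i$. Running the symmetric construction with $N$ in place of $M$ yields an analogous set $S'(L_0)$ ruled by $\{A'_j\}\subset\s{N}$, $\{B'_i\}\subset\s{L}$, and $\{C'_j\}$.

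The main obstacle is to prove $S(L_0)=S'(L_0)$: this forces $\{B_i\}=\{B'_i\}$ by uniqueness of the spread partition, and the no-fourth-ruling remark preceding the definition of $L$-parallelism then identifies $\{C_i\}=\{A'_j\}\subset\s{N}$ and $\{C'_j\}=\{A_i\}\subset\s{M}$. I would attack this via the inversive plane $\I(y)$ at a point $y\in L_0$. A double count using the $(n+1)^2$ lines of $\U$ meeting all of $L,M,N$ (which appear in the proof of Theorem \ref{thm LMN}) shows that these lines cover the $(n-2)(n+1)^2$ points of $\U\setminus(L\cup M\cup N)$ with each point incident to exactly one; so there is a unique line $P(y)$ through $y$ meeting all three triangle sides, and consequently $\arc L$, $\arc M$, $\arc N$ are three circles of $\I(y)$ sharing only $P(y)$, hence pairwise tangent there. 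Using this shared tangency together with the Dembowski--Hughes flock/bundle characterization employed in the proof of Lemma \ref{lemma FMN}, I aim to verify that $\arc L\in\F(m(y),n(y))$ in $\I(y)$, where $m(y)\in\s{M}$ and $n(y)\in\s{N}$ are the spread lines through $y$. Lemma \ref{lemma FMN} then yields $n(y)\subset S(L_0)$; ranging $y$ over $L_0$ gives $A'_j\subset S(L_0)$ for every $j$, hence $S'(L_0)\subseteq S(L_0)$, and equality follows by symmetry.

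With the triply ruled structure of each $S(L_0)$ in hand, Lemma \ref{lemma equiv relation} applied with $M$ in place of $L$, combined with the identification $K=L$ above, shows that $\|_M$ partitions $\s{L}\setminus\{M,N\}$ into exactly $n-2$ equivalence classes of $n+1$ lines each, and each class is the $B$-family of a single triply ruled set. Selecting one $L_0$ per class produces $n-2$ pairwise disjoint triply ruled sets whose union exhausts $\U\setminus(L\cup M\cup N)$ by the count $(n-2)(n+1)^2$. Reading off the $\s{L}$-, $\s{M}$-, $\s{N}$-rulings of these $n-2$ sets furnishes the three compatible partitions $\{L^i_j\}$, $\{M^i_j\}$, $\{N^i_j\}$ asserted in Theorem \ref{thm bigP}.
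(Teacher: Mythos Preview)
Your framework (apply Lemma~\ref{lemma parallel class} to produce a triply ruled set, use Lemma~\ref{lemma concurrent} to locate one ruling in $\s{L}$, then iterate) matches the paper's, and your counting at the end is correct. The difference is in how you identify the third ruling with $\s{N}$, and here your plan takes an unnecessary detour that leaves a real gap.

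You build $S(L_0)$ via $M$ and a second set $S'(L_0)$ via $N$, and then declare the ``main obstacle'' to be proving $S(L_0)=S'(L_0)$ through an inversive-plane argument culminating in the claim $\arc{L}\in\F(m(y),n(y))$ in $\I(y)$. That claim is never argued: the pairwise tangency of $\arc L,\arc M,\arc N$ at $P(y)$ places these three circles in a common pencil, but there is no evident mechanism by which this, plus Dembowski--Hughes, forces $\arc L$ to lie in the flock with carriers $m(y),n(y)$. The points $m(y),n(y)$ are not on any of $\arc L,\arc M,\arc N$, and you have not related the bundle $\B(m(y),n(y))$ to the tangent pencil at $P(y)$. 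As written this step is a hope, not a proof.

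The paper avoids the obstacle altogether. Having built one triply ruled set with rulings $\{A_i\}\subset\s{M}$, $\{B_i\}\subset\s{L}$, and a third family $\{C_i\}$, it observes that each $C_i$ meets every $A_j$ and every $B_j$; hence $\{C_i\}$ is simultaneously an $M$-parallel class and an $L$-parallel class. Applying Lemma~\ref{lemma concurrent} once with respect to $M$ and once with respect to $L$ gives $C_i\in\s{N'}$ with $N'\in\s{L}\cap\s{M}=\{N\}$. No second set $S'(L_0)$ is needed, and no flock computation in $\I(y)$ is required. You already have all three rulings of $S(L_0)$ in hand; the missing idea is simply to apply Lemma~\ref{lemma concurrent} a second time to the third ruling rather than to rebuild everything from $N$.
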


\begin{proof}
Take a line $M^1_1 \in \m_M \setminus \{L,M,N\}$.
Note that $M^1_1 \notin \s{L}$ because $ \s{L} \cap \s{M} = \{N\}$. Let $L^1_1,L^1_2,\cdots,L^1_{n + 1}$ be the lines of $\s{L}$ meeting $M^1_1$.
By Lemmas \ref{lemma parallel class} and \ref{lemma concurrent}, there is an $L$-parallel class $\{M^1_1$, $M^1_2$, $\cdots$, $M^1_{n + 1}\}$$\subset \s{M}$ and its $L$-non-parallel class $\{N^1_1,N^1_2$, $\cdots$, $N^1_{n + 1}\}$ both partitioning the set of points covered by $L^1_1$, $L^1_2$, $\cdots$, $L^1_{n + 1}$.
Since $\{N^1_1,N^1_2$, $\cdots$, $N^1_{n + 1}\}$ is an $L$-parallel class and an $M$-parallel class, it is a subset of $\s{N'}$ where $N' \in \s{L} \cap \s{M}$ by applying Lemma \ref{lemma concurrent} twice. Hence $N' = N$.
Repeat the process $n-3$ times by taking a line $M^i_1 \in \m_M \setminus ( \{L,M,N\} \cup \{ M^k_l | k = 1,2,\cdots,i-1,l = 1,2,\cdots,n + 1\})$. This finishes the proof.
\end{proof}

\begin{remark}
We may interpret Theorem \ref{thm bigP} as follows. If $\U$ is embedded in a projective plane $\pi$ as a polar unital via the construction of \cite[Theorem 1.1]{HW2}, then $\s{L}$, $\s{M}$, $\s{N}$ are respectively the set of lines through the pole of $L$, $M$, and $N$. Thus, Theorem \ref{thm bigP} says that in $\pi$, the set of points of $\U$ is partitioned into a self-polar triangle, and $n - 2$ subsets of $(n+1)^2$ points triply ruled by lines through the vertices of the triangle.
\end{remark}

\begin{remark}
In the setting in Theorem \ref{thm bigP}, for any disjoint index sets $I_L, I_M, I_N$ such that $I_L \cup I_M \cup I_N = \{1,2,\cdots,n + 1\}$, the set $\{L,M,N\}\cup$$\{L^i_j \mid i\in I_L, j = 1,2,\cdots,n + 1\}$$\cup \{M^i_j \mid i\in I_M, j = 1,2,\cdots,n + 1\}$$\cup \{N^i_j \mid i\in I_N, j = 1,2,\cdots,n + 1\}$ is a spread of $\U$. These spreads are the subregular spreads studied by Dover \cite{Dov}.
\end{remark}

\section{From a special spread of a unital to a regular spread of $\PG(3,n)$} \label{st 4}
\noindent
From now on, we fix a line $L$, and let $x_1,x_2,\cdots, x_{n+1}$ be the points on $L$.

Following Wilbrink \cite{Wil}, we are going to construct a generalized quadrangle $GQ(L)$, which is isomorphic to $Q(4,n)$ \cite{Wil}.
We will then embed $Q(4,n)$ into $\PG(4,n)$ and choose a 3-dimensional projective space $\Sigma$ in $\PG(4,n)$. It turns out that the special spread $\m_{L}$ of $\U$ introduced in Section \ref{st 2} defines a regular spread $\S$ of $\Sigma$ (Theorem \ref{thm regular}). Using the Bruck-Bose construction \cite{BB1,BB2}, we will construct a projective plane in Section \ref{st 6} using this regular spread $\S$, such that $\U$ is embedded in a way into this projective plane as a classical unital.

To construct $GQ(L)$, we recall the definition of the sets $\A_{ij}, 1 \leq i, j \leq n + 1$ of $\U$ (\cite{Wil}, also see \cite{HW2}).
Considering $\mathcal I(x_1)$, denote the circles in the bundle $\B(L, \infty_{x_1})$ by $\{L, \infty_{x_1}\} \cup \A_{1j}$, where $j = 1, 2, \cdots, n + 1$. We have defined $\A_{11}, \A_{12}, \cdots, \A_{1, n + 1}$. Next, for each $j \in \{1, 2, \cdots, n + 1\}$, consider the pencil $<L, \{L, \infty_{x_1}\} \cup \A_{1j}>$ in $\I(x_1)$, i.e. the maximal set of mutually tangent circles through $L$ with a member the circle $\{L, \infty_{x_1}\} \cup \A_{1j}$.
For $k = 1, 2, \cdots, n - 1$, denote by $C_{jk}$ the remaining circles in the pencil. For $i \in \{2, 3, \cdots, n + 1\}$, consider the $n - 1$ lines on $x_i$ which correspond respectively to these $n - 1$ circles $C_{jk}$'s. Denote this set of lines by $\A_{ij}$. We have defined $\A_{2j}, \A_{3j}, \cdots, \A_{n + 1,j}$, for $j = 1, 2, \cdots, n + 1$. The definition of $\A_{ij}$ is independent on the choice of the point $x_1 \in L$.

The set of points of $GQ(L)$ is
$$\{ \A_{ij} \mid i,j = 1,2 \cdots n + 1 \} \cup \{\ y \mid y \mbox{ is a point of }\U\mbox{ not on }L\}.$$
The set of lines of $GQ(L)$ is
$$\{ A_i \mid i = 1,2 \cdots n + 1 \} \cup \{ B_i \mid i = 1,2 \cdots n + 1 \} \cup \{ K \mid K\neq L\mbox{ is a line of }\U\mbox{ meeting }L\}.$$
The incidence of $GQ(L)$ is as follows. $\A_{ij}$ is incident with $A_k$ if and only if $i = k$; $\A_{ij}$ is incident with $B_k$ if and only if $j = k$; for any line $K$ of $\U$ meeting $L$, $\A_{ij}$ is incident with $K$ if and only if $K \in \A_{ij}$; a point $y$ of $\U$ is never incident with $A_i$ or $B_j$ for $i,j = 1,2,\cdots,n + 1$; incidence between a point and a line of $\U$ is the natural incidence.

Consider a parabolic quadric $\P$ in $\PG(4,n)$. The points and lines of $\P$ form a generalized quadrangle $Q(4,n)$ \cite{PT}. By \cite{Wil}, $GQ(L)$ is isomorphic under some GQ isomorphism
\begin{equation}\label{eqn varphi}
\varphi: GQ(L) \longrightarrow Q(4,n)
\end{equation}
to $Q(4,n)$.

Consider the 3-dimensional subspace $\Sigma$ of $\PG(4,n)$ determined by the skew lines $\varphi(A_1)$ and $\varphi(A_2)$.
Then $\Sigma \cap \P = \{ \varphi(\A_{ij}) \mid i,j = 1,2 \cdots n + 1 \}$, and is a hyperbolic quadric $\h$ with regulus
\begin{equation}\label{eqn R0}
\R_0 = \{ \varphi(A_i) \mid i = 1,2,\cdots,n + 1 \}
\end{equation}
and opposite regulus $ \{ \varphi(B_i) \mid i = 1,2,\cdots,n + 1 \}$.
$\h$ defines a polarity
\begin{equation}\label{eqn alpha}
\alpha: \Sigma\longrightarrow \Sigma
\end{equation}
of $\Sigma$.

The tangent spaces of $\P$ are concurrent at a point {\it nucleus} ${\bf N}$ of $\P$.
Let
\begin{equation}\label{eqn mu}
\mu: \P \longrightarrow \Sigma
\end{equation}
be the function defined as follows: for any point ${\bf V}$ of $\P$, $\mu({\bf V})$ is the intersection point of $\Sigma$ and the line joining ${\bf N}$ and ${\bf V}$.
Since $\h\cap \Sigma = \h$, $\mu$ is identity on $\h$. Since $|\P| = n^3 + n^2 + n + 1 = |\Sigma|$, $\mu$ is a bijection. Hence, we have a 1-1 correspondence between points of $GQ(L)$ and that of $\Sigma$ via the composition function $\mu \varphi$. Furthermore, some quadratic cones in $\P$ are mapped to planes of $\Sigma$ because $n$ is even:

\begin{lemma}\label{lemma projcone}
Let ${\bf V}\in \P \setminus \Sigma$.
Let $\Q$ be the quadratic cone 
formed by the intersection of $\P$ and the tangent space of $\P$ at ${\bf V}$.
Then $\mu$ maps $\Q$ onto the plane $\alpha(\mu(\bf V))$ in $\Sigma$. Furthermore, the plane $\alpha(\mu(\bf V))$ meets $\h$ in an irreducible conic with nucleus $\mu(\bf V)$.
\end{lemma}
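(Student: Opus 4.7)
The plan is to interpret $\mu$ as projection from the nucleus and to identify its effect on tangent hyperplanes via the bilinear form of $\P$. Since $n$ is even, the nucleus $\mathbf{N}$ lies on every tangent hyperplane of $\P$; in particular $\mathbf{N}\in T_{\mathbf{V}}\P$. For any generator $\ell$ of $\Q$ (a line of $\P$ through $\mathbf{V}$), the plane $\langle \mathbf{N},\ell\rangle$ then lies in $T_{\mathbf{V}}\P$, so the projection $\mu(\ell)$ sits inside $\tau:=T_{\mathbf{V}}\P\cap \Sigma$. In particular $\mu(\Q)\subseteq \tau$, and $\tau$ is a plane of $\Sigma$ since $\mathbf{N}\notin \Sigma$.

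Next, I would identify $\tau$ with $\alpha(\mu(\mathbf{V}))$. Let $B$ denote the bilinear form of the quadratic form defining $\P$. The tangent hyperplane at $\mathbf{V}$ has equation $B(\mathbf{V},\cdot)=0$. In even characteristic $\mathbf{N}$ spans the radical of $B$, so since $\mu(\mathbf{V})$ differs from $\mathbf{V}$ by a scalar multiple of $\mathbf{N}$ we get $B(\mathbf{V},\cdot)=B(\mu(\mathbf{V}),\cdot)$. Because $B|_{\Sigma\times\Sigma}$ is the bilinear form of $\h$ inducing the polarity $\alpha$, this yields $\tau=\alpha(\mu(\mathbf{V}))$. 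Finally, $\mu$ is a bijection from $\P$ onto $\Sigma$, hence injective on $\Q$; combined with $|\Q|=n^{2}+n+1=|\alpha(\mu(\mathbf{V}))|$, this forces $\mu(\Q)=\alpha(\mu(\mathbf{V}))$.

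For the second assertion, observe that $\mathbf{V}\in \P\setminus \Sigma$ forces $\mu(\mathbf{V})\notin \h$, since $\mu$ is the identity on $\h=\P\cap \Sigma$. Because $n$ is even, the polarity $\alpha$ of the hyperbolic quadric $\h$ is a null (symplectic) polarity, and the polar plane of any point $\mathbf{P}\notin \h$ meets $\h$ in an irreducible conic whose nucleus is $\mathbf{P}$: a direct computation shows that the radical of the bilinear form of $\h$ restricted to such a polar plane is spanned by $\mathbf{P}$, so the restricted quadratic form is non-degenerate on the plane with that single point as its nucleus. Applying this with $\mathbf{P}=\mu(\mathbf{V})$ finishes the proof. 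The main obstacle is the bookkeeping in the second paragraph: one must carefully juggle the two polarities (from $\P$ in $\PG(4,n)$ and from $\h$ in $\Sigma$) and exploit that $\mathbf{N}$ spans the radical of $B$ — the even-characteristic phenomenon that makes $\mu$ well-behaved — so that $\mathbf{V}$ and $\mu(\mathbf{V})$ produce the same tangent-hyperplane equation.
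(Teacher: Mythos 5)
Your proof is correct, but it establishes the key containment $\mu(\Q)\subseteq\alpha(\mu(\mathbf{V}))$ by a genuinely different mechanism than the paper. The paper argues generator by generator: each generator $l$ of $\Q$ meets $\Sigma$ in a single point of $\h$, so $\mu(l)$ is a tangent line of $\h$ through $\mu(\mathbf{V})$, and the even-order fact that all tangent lines of a hyperbolic quadric through a fixed point lie in that point's polar plane places every $\mu(l)$, hence $\mu(\Q)$, inside $\alpha(\mu(\mathbf{V}))$; the count $|\Q|=n^2+n+1$ then forces equality, exactly as in your last step. You instead work with the whole tangent hyperplane: since $\mathbf{N}$ spans the radical of the bilinear form $B$ of $\P$ in even characteristic, $\mathbf{N}\in T_{\mathbf{V}}\P$, so projection from $\mathbf{N}$ sends $\Q\subset T_{\mathbf{V}}\P$ into the plane $T_{\mathbf{V}}\P\cap\Sigma$, and the identity $B(\mu(\mathbf{V}),\cdot)=B(\mathbf{V},\cdot)$ identifies that plane with $\alpha(\mu(\mathbf{V}))$. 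Your route is more linear-algebraic; what it buys is that the second assertion --- $\mu(\mathbf{V})$ is the nucleus of the irreducible conic $\alpha(\mu(\mathbf{V}))\cap\h$ --- becomes an explicit radical computation (the radical of $B$ restricted to the polar plane is spanned by $\mu(\mathbf{V})$, which is off $\h$), whereas the paper simply asserts this part. The paper's route stays synthetic and never invokes the form of $\P$ directly. Both hinge on the same even-characteristic phenomena (the nucleus of $\P$ and the symplectic nature of $\alpha$), and both are complete. One cosmetic remark: the cleanest reason $T_{\mathbf{V}}\P\cap\Sigma$ is a plane is that $\mathbf{V}\in T_{\mathbf{V}}\P\setminus\Sigma$, so the two hyperplanes of $\PG(4,n)$ are distinct; your observation that $\mathbf{N}\in T_{\mathbf{V}}\P\setminus\Sigma$ serves the same purpose.
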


\begin{proof}
Since ${\bf V}\notin \Sigma$, every generator of $\Q$ meets $\Sigma$ in a unique point (of $\h$). Hence, for any generator $l$ of $\Q$, $\mu(l)$ is tangent to $\h$ and passes through $\mu(\bf V)$. By a property of hyperbolic quadric of even order, $\{ \mu(l) \mid l$ is a generator of $\Q\}$ is on the plane $\alpha(\mu(\bf V))$. Since $|\Q| = n^2 + n + 1$, the image set $\mu(\Q)$ is a plane. Furthermore, $\mu(\bf V)$ is the nucleus of the irreducible conic formed by the intersection of $\mu(\Q)$ and $\h$.
\end{proof}

Using Lemma \ref{lemma projcone} and the GQ isomorphism $\varphi$, we prove that every line of $\s{L}$ is mapped to a line of $\Sigma$ under $\mu \varphi$:

\begin{lemma}\label{lemma Mvarphimu}
Let $M$ be a line of $\U$ in $\s{L}$.
Then
$\mu(\varphi(M))$ is a line in $\Sigma$.
\end{lemma}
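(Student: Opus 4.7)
The plan is to identify $\varphi(M)$ as a \emph{hyperbolic line} of $Q(4,n)$, that is, a non-degenerate conic cut out on $\P$ by a plane $\Pi$ of $\PG(4,n)$ through the nucleus ${\bf N}$; the projection $\mu$ from ${\bf N}$ will then send this conic bijectively onto the line $\Pi\cap\Sigma$ of $\Sigma$. The unital-geometric input that drives the identification is the self-polar triangle produced by Lemma \ref{lemma polartri}.

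First I would note that, since $M\in\s{L}$ is disjoint from $L$, the $n+1$ points $y_1,\dots,y_{n+1}$ of $M$ are points of $GQ(L)$, and they are pairwise non-collinear in $GQ(L)$: the only line of $\U$ through any two of them is $M$ itself, which misses $L$. Next, let $N$ be the third line of the self-polar triangle through $L$ and $M$ supplied by Lemma \ref{lemma polartri}. By Theorem \ref{thm LMN}\eqref{item LMN4}, for every $y_k\in M$ and every $z\in N$ the line $zy_k$ meets $L$ and hence is a line of $GQ(L)$, so $y_k$ and $z$ are $GQ$-collinear. Consequently $\varphi(N)$ lies inside the common perp $\bigcap_k \varphi(y_k)^\perp$ in $Q(4,n)$. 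Since in any generalized quadrangle of order $(n,n)$ the perp of two non-collinear points has exactly $n+1$ elements, $|\bigcap_k \varphi(y_k)^\perp|\le n+1=|\varphi(N)|$, which forces $\bigcap_k \varphi(y_k)^\perp=\varphi(N)$. The symmetric argument with $M$ and $N$ swapped gives $\bigcap_{z\in N}\varphi(z)^\perp=\varphi(M)$, so $\varphi(M)=\varphi(M)^{\perp\perp}$ is a hyperbolic line of $Q(4,n)$.

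To finish, I would invoke the standard structure of $Q(4,n)$ in even characteristic: for $n$ even every non-collinear pair is regular, and the hyperbolic lines are precisely the non-degenerate plane sections $\Pi\cap\P$ with $\Pi$ a plane through the nucleus ${\bf N}$ (Payne--Thas \cite{PT}); indeed, ${\bf N}$ lies in every tangent space of $\P$, hence in every tangent line of the conic $\Pi\cap\P$ inside $\Pi$, so ${\bf N}$ is the nucleus of that conic. Thus $\varphi(M)=\Pi\cap\P$ for some plane $\Pi$ containing ${\bf N}$. Because $n$ is even and ${\bf N}$ is the nucleus, every line through ${\bf N}$ in $\Pi$ is tangent to $\Pi\cap\P$, so $\mu$ restricts to a bijection from the $n+1$ points of $\Pi\cap\P$ onto the $n+1$ points of the line $\Pi\cap\Sigma$, giving $\mu(\varphi(M))=\Pi\cap\Sigma$, a line of $\Sigma$. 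The main obstacle is the middle step, namely recovering the common $\perp$ of $\varphi(M)$ purely from unital geometry by combining the self-polar triangle with Theorem \ref{thm LMN}\eqref{item LMN4}; once $\varphi(M)$ is recognized as a hyperbolic line, passing to a line of $\Sigma$ is a standard even-characteristic feature of $Q(4,n)$, in the same spirit as Lemma \ref{lemma projcone}.
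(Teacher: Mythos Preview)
Your proof is correct, and at the level of unital geometry it coincides with the paper's: both introduce the third line $N$ of the self-polar triangle and use Theorem~\ref{thm LMN} to see that every point of $N$ is $GQ(L)$-collinear with every point of $M$, i.e.\ $\varphi(M)\subset\varphi(z)^{\perp}$ for each $z\in N$. The difference is only in how the final step is packaged. The paper stays inside its own toolkit: each $\varphi(z)^{\perp}$ is the tangent cone $\Q_z$, Lemma~\ref{lemma projcone} sends $\Q_z$ to the plane $\alpha(\mu(\varphi(z)))$ in $\Sigma$, and $\mu(\varphi(M))$ therefore lies in the intersection of $n+1$ such planes, hence on a line. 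You instead identify $\varphi(M)$ as a hyperbolic line $\{\varphi(y_1),\varphi(y_2)\}^{\perp\perp}$ and invoke the standard $Q(4,n)$, $n$ even, fact from \cite{PT} that hyperbolic lines are exactly the non-degenerate conics $\Pi\cap\P$ with $\Pi$ a plane through ${\bf N}$, then project $\Pi$ to the line $\Pi\cap\Sigma$. Your route is a touch more conceptual and makes the regularity of $Q(4,n)$ explicit; the paper's route is more self-contained, needing nothing beyond the already-proved Lemma~\ref{lemma projcone}. Either way the content is the same: the intersection $\bigcap_{z\in N}T_{\varphi(z)}\P$ is the plane $\langle \varphi(y_1),\varphi(y_2),{\bf N}\rangle$, and intersecting with $\Sigma$ (equivalently, with the polar planes $\alpha(\mu(\varphi(z)))$) gives the desired line.
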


\begin{proof}
By Theorem \ref{thm LMN}, $L \|_{z_i} M$ for $i = 1,2,\cdots,n + 1$.
Let $\Q_i$ be the quadratic cone 
formed by the intersection of $\P$ and the tangent space of $\P$ at $\varphi(z_i)$.
Hence $\varphi(M) \subset \Q_i$.
By Lemma \ref{lemma projcone}, $\mu(\varphi(M)) \subset \bigcap^{n + 1}_{i = 1} \alpha(\mu(\varphi(z_i)))$ and $\alpha(\mu(\varphi(z_i)))$'s are planes in $\Sigma$. Since $| \varphi(M) | = n + 1$ and $\mu(\varphi(M))$ is in the intersection of $n + 1$ planes, $\mu(\varphi(M))$ is a line.
\end{proof}

Since $\m_L$ is a spread of $\U$ and $\mu$ is a bijection, the set $\{\mu(\varphi(L')) \mid L'\in \s{L}\}$ consists of disjoint lines. Let
\begin{equation}\label{eqn S}
\S = \R_0 \cup \{\mu(\varphi(L')) \mid L'\in \s{L}\}.
\end{equation}
Then $\S$ is a spread. We claim that $\S$ is regular (Theorem \ref{thm regular}). The justification of this claim requires the notion of tube \cite{CK}:

When $q$ is even, a {\it tube} in $\PG(3,q)$ is a pair $\mathcal T = \{l,\mathcal B\}$, where $\{l\} \cup \mathcal B$ is a collection of mutually disjoint lines of $\PG(3,q)$ such that for each plane $\Pi$ of $\PG(3,q)$ containing $l$, the intersection of $\Pi$ with the lines of $\mathcal B$ is a hyperoval. For any mutually skew lines $l_1,l_2,l_3$ in $\Sigma$, denote by $\R(l_1,l_2,l_3)$ the unique regulus determined by them. According to Cameron and Knarr \cite {CK}, if $\{l,\{l_0,l_1,\cdots,l_{q + 1}\}\}$ is a tube, then the union $\bigcup_{i = 1}^{n + 1} \R(l,l_0,l_i)$ is a regular spread in $\PG(3,q)$.

\begin{lemma}\label{lemma tube}
Let $M,N$ be lines of $\U$. Suppose $L,M,N$ form a self-polar triangle. Then the pair $\mathcal T = \{ \mu(\varphi(M))$, $\{ \mu(\varphi(N)) \} \cup \R_0 \}$ is a tube in $\Sigma$.
\end{lemma}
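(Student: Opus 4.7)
The plan is to verify the two defining properties of a tube: (i) the $n+3$ lines $\mu(\varphi(M))$, $\mu(\varphi(N))$ and the members of $\R_0$ are mutually disjoint, and (ii) for every plane $\Pi$ of $\Sigma$ through $\mu(\varphi(M))$, the intersection of $\Pi$ with the lines of $\{\mu(\varphi(N))\}\cup\R_0$ is a hyperoval. Property (i) is immediate: $\R_0$ is a regulus of $\h$, so its members are mutually skew, while $\mu(\varphi(M))$ and $\mu(\varphi(N))$ belong to the spread $\S$ of $\Sigma$, and hence are pairwise disjoint from each other and from every line of $\R_0$.

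For (ii), each of the $n+2$ lines in $\{\mu(\varphi(N))\}\cup\R_0$ is skew to $\mu(\varphi(M))$, so a dimension count in $\PG(3,n)$ shows that each meets $\Pi$ in exactly one point, yielding $n+2$ intersection points. I must show that no three are collinear. If three of them all come from $\R_0$ and lie on a common line $r$, then $r$ meets $\h$ in at least three points, so $r\subset\h$. Since the lines of $\R_0$ are mutually skew, $r$ cannot belong to $\R_0$; and if $r$ were in the opposite regulus, then $r\subset\Pi$ would force $r$ and $\mu(\varphi(M))$ to be coplanar. But the nucleus property of $\mathbf{N}$ implies $\mu(\varphi(M))\cap\h=\emptyset$, so $\mu(\varphi(M))$ is skew to every line of $\h$, contradicting coplanarity.

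The substantive case is to rule out a collinearity between two points $P,Q$ coming from $\R_0$ and the point $R:=\Pi\cap\mu(\varphi(N))$. Write $R=\mu(\varphi(y))$ for the unique $y\in N$ with this property. Lemma \ref{lemma projcone} identifies $\alpha(R)$ with $\mu(\Q_y)$, where $\Q_y$ is the tangent cone to $\P$ at $\varphi(y)$, and asserts that $\alpha(R)\cap\h$ is an irreducible conic with nucleus $R$. I shall prove $\Pi=\alpha(R)$. Since $R\in\alpha(R)\cap\Pi$ and $R\notin\mu(\varphi(M))$, and in even characteristic $\alpha(R)$ is a plane through $R$, it suffices to show $\mu(\varphi(M))\subset\alpha(R)$, i.e., that every $z\in M$ is collinear with $y$ in $GQ(L)$. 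Let $J$ be the line of $\U$ through $z$ and $y$; then $J$ meets $M$ at $z$ and $N$ at $y$, and since $L,M,N$ form a self-polar triangle, Theorem \ref{thm LMN}\eqref{item LMN4} forces $J$ to meet $L$ as well, so $J$ is a line of $GQ(L)$ joining $z$ and $y$. This proves $\Pi=\alpha(R)$; consequently $P$ and $Q$ lie on the conic $\alpha(R)\cap\h$ whose nucleus is $R$, and since any line through the nucleus of an irreducible conic in even characteristic is tangent, the secant line $PQ$ cannot pass through $R$, contradicting the supposed collinearity.

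The main obstacle is the identification $\Pi=\alpha(R)$. Once it is in hand, the hyperoval condition reduces to the classical fact that secants of an irreducible conic avoid its nucleus. This identification is precisely where the self-polar triangle hypothesis enters essentially, through the transversality statement Theorem \ref{thm LMN}\eqref{item LMN4}, combined with the translation of $\alpha$ into tangent-cone data provided by Lemma \ref{lemma projcone}.
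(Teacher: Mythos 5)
Your proof is correct and follows essentially the same route as the paper: you identify each plane $\Pi$ through $\mu(\varphi(M))$ with the polar plane $\alpha(\mu(\varphi(y)))$ of the point $y\in N$ for which $\mu(\varphi(y))=\Pi\cap\mu(\varphi(N))$, using the transversality of the self-polar triangle (Theorem \ref{thm LMN}) together with Lemma \ref{lemma projcone}, and then read off the hyperoval as an irreducible conic plus its nucleus. Your write-up just makes explicit the counting and the ``no three collinear'' reformulation that the paper leaves implicit.
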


\begin{proof}
Let $z_1,z_2,\cdots,z_{n + 1}$ be the points of $N$. By Lemma \ref{lemma projcone}, the $n + 1$ planes in $\Sigma$ containing $\mu(\varphi(M))$ are $\alpha(\mu(\varphi(z_i)))$, $i = 1,2,\cdots,n + 1$. Since $\R_0$ is a regulus of $\h$, $\alpha(\mu(\varphi(z_i)))$ meets the points covered by $\R_0$ in an irreducible conic with nucleus $\mu(\varphi(z_i))$ by Lemma \ref{lemma projcone}. The result follows.
\end{proof}

\begin{theorem}\label{thm regular}
$\S$ is a regular spread in $\Sigma$.
\end{theorem}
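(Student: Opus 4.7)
The plan is to invoke Cameron and Knarr's theorem on tubes in $\PG(3,n)$ to produce a regular spread of $\Sigma$ and identify it with $\S$.

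First, by Lemma~\ref{lemma polartri}, pick $M \in \s{L}$ and let $N \in \s{L}$ be the unique line such that $L, M, N$ is a self-polar triangle with respect to $\U$. By Lemma~\ref{lemma tube}, the pair $\mathcal{T} = \{\mu(\varphi(M)), \{\mu(\varphi(N))\} \cup \R_0\}$ is a tube in $\Sigma$, and by the construction of $\S$ all lines of $\mathcal{T}$ already lie in $\S$. Applying the Cameron--Knarr construction gives the regular spread
\[
\S^\dagger \;=\; \bigcup_{i=1}^{n+1} \R\bigl(\mu(\varphi(M)), \mu(\varphi(N)), l_i\bigr),
\]
where $l_i$ ranges over $\R_0$; in particular $\mathcal{T} \subseteq \S^\dagger$.

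Next I would show $\S = \S^\dagger$. Both are spreads of $\Sigma$ of size $n^2+1$, and both contain $\R_0 \cup \{\mu(\varphi(M)), \mu(\varphi(N))\}$, so it suffices to establish $\mu(\varphi(L')) \in \S^\dagger$ for every remaining $L' \in \s{L} \setminus \{M, N\}$. For each such $L'$, apply Lemma~\ref{lemma polartri} once more to obtain $N' \in \s{L}$ with $L, L', N'$ self-polar; then Lemma~\ref{lemma tube} supplies a second tube $\mathcal{T}' = \{\mu(\varphi(L')), \{\mu(\varphi(N'))\} \cup \R_0\}$, and Cameron--Knarr furnishes a second regular spread $\S''$ containing $\mu(\varphi(L'))$, $\mu(\varphi(N'))$ and $\R_0$. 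The desired inclusion then follows once one proves the identity $\S^\dagger = \S''$.

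The main obstacle is precisely this identification $\S^\dagger = \S''$: two regular spreads of $\Sigma$ sharing a common regulus need not coincide in general. The plan is to exploit that both tubes share the full regulus $\R_0$ and extend it by a single unital-line image, combined with the self-polar triangle geometry of $\U$ and the triply ruled set decomposition of Theorem~\ref{thm bigP}, in order to produce enough common reguli between $\S^\dagger$ and $\S''$ to force coincidence. Once $\S = \S^\dagger$ is established by cardinality, regularity of $\S$ is immediate.
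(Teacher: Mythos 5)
Your first paragraph matches the paper exactly: the paper also takes a self-polar triangle $L,M,N$, applies Lemma \ref{lemma tube} and the Cameron--Knarr theorem to get the regular spread $\bigcup_{i=1}^{n+1}\R(\mu(\varphi(M)),\mu(\varphi(N)),\mu(\varphi(A_i)))$, and then reduces to showing that $\S$ is contained in this spread. The problem is everything after that. The step you yourself flag as ``the main obstacle'' --- getting $\mu(\varphi(L'))\in\S^\dagger$ for the remaining $L'\in\s{L}\setminus\{M,N\}$ --- is exactly the content of the theorem, and your proposal does not prove it. You propose to build a second Cameron--Knarr spread $\S''$ from a second tube and then argue $\S^\dagger=\S''$, but you give no argument for this identification beyond a stated intention to ``produce enough common reguli,'' and as you note, two regular spreads sharing the regulus $\R_0$ need not coincide (there are $(n^2-n)/2$ regular spreads through a fixed regulus). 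Invoking Theorem \ref{thm bigP} here is also circular in spirit: the paper only uses Theorem \ref{thm bigP} later, in Section 5, \emph{after} regularity of $\S$ is available. So the proposal is a correct setup followed by an unproven reduction; it is not a proof.

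The paper closes the gap with a direct, pointwise argument that you are missing. Given $L_1\in\m_L\setminus\{L,M,N\}$, Lemma \ref{lemma LJzparallel} (applied to $L\in\s{M}$, $L_1\in\s{L}$) produces a point $x_i\in L$ with $M\|_{x_i}L_1$. The $n+1$ lines $K_1,\dots,K_{n+1}$ through $x_i$ meeting $M$ then also meet $L_1$ (by $x_i$-parallelism) and meet $N$ (by Theorem \ref{thm LMN}); moreover they are mutually ``disjoint'' as points of $\Sigma$ because they lie on distinct $\A_{ij}$'s (they are the points of the circle $\arc{M}\in\F(L,\infty_{x_i})$, which is tangent to every circle of the bundle $\B(L,\infty_{x_i})$). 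Hence $\{\mu(\varphi(K_j))\}$ is precisely the opposite regulus of $\R_i=\R(\mu(\varphi(M)),\mu(\varphi(N)),\mu(\varphi(A_i)))$, and since $\mu(\varphi(L_1))$ meets every line of that opposite regulus, it lies in $\R_i\subset\S^\dagger$. If you want to salvage your write-up, replace the $\S^\dagger=\S''$ scheme with this argument.
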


\begin{proof}
Let $M,N$ be lines such that $L,M,N$ form a self-polar triangle. For $i = 1,2,\cdots,n + 1$, let $\R_i = \R( \mu(\varphi(M))$, $ \mu(\varphi(N))$, $\mu(\varphi(A_i)) )$. By \cite {CK} mentioned above, the union $\bigcup_{i = 1}^{n + 1} \R_i$ is a regular spread. We are done if we show $\S = \bigcup_{i = 1}^{n + 1} \R_i$. Thus it suffices to show $\S \subset \bigcup_{i = 1}^{n + 1} \R_i $.

Let $L_1 \in \m_L \setminus \{L,M,N\}$. Since $L\in \s{M}$ and $L_1\in \s{L}$, there is a point $x_i \in L$ such that $M \|_{x_i} L_1$ by Lemma \ref{lemma LJzparallel}.
Let $K_1,K_2,\cdots,K_{n + 1}$ be the lines of $\U$ on $x_i$ meeting $M$ (and hence meeting $L_1$).
Then they meet $N$ by Theorem \ref{thm LMN}.
Hence, $\mu(\varphi(K_1))$, $\mu(\varphi(K_2))$, $\cdots$, $\mu(\varphi(K_{n + 1}))$ are lines in $\Sigma$ meeting $\mu(\varphi(M))$, $\mu(\varphi(N))$ and $\mu(\varphi(A_i))$.
Furthermore, these $n + 1$ lines are disjoint.
Indeed, the points $K_1,K_2,\cdots,K_{n + 1}$ are incident with $\arc M \in \F(L,\infty_{x_i})$ in $\I(x_i)$, where $\arc M$ is tangent to each circle in the bundle $\B(L, \infty_{x_i})$ \cite{DH}, and so we may assume $K_j\in \A_{ij}$ for $j = 1,2,\cdots n + 1$ .
Thus, $\{\mu(\varphi(K_j)) \mid j = 1,2,\cdots,n + 1\}$ is the opposite regulus of $\R_i$. Since $\mu(\varphi(L_1))$ meets every line in the opposite regulus of $\R_i$, it is in $\R_i$.
\end{proof}

\section{From a partition of $\mathcal S^{*}_L$ to a pencil of quadrics in $\PG(3,n)$}\label{st 5}
\noindent
We use the notations in Section \ref{st 4} and continue to prove that $\U$ is classical.
The key result in this section is Theorem \ref{thm pencil}. It says that the image of the partition of $\m_{L}$ in Theorem \ref{thm bigP} under $\mu\varphi$ corresponds to a pencil of quadrics of two lines and $n-1$ hyperbolic quadrics of the projective space $\Sigma$, where $\mu$, $\varphi$ and $\Sigma$ are defined Section \ref{st 4}.
To prove Theorem \ref{thm pencil}, we have to describe every regulus of the spread $\S$ defined in \eqref{eqn S}, in terms of the geometry of $\U$.
A regulus of $\S$ has two, one, or no common lines with $\R_0$. We consider these cases separately.

We first consider the reguli of $\S$ with exactly one common line with $\R_0$. They can be described using $x$-parallelism introduced in Section \ref{st 2}, where the $x$'s are the points of the line $L$.

\begin{lemma}\label{lemma C1}
Let $i \in \{1,2, \cdots, n + 1\}$ and $L_1, L_2,\cdots,L_n \in \s{L}$.
If $\R = \{ \mu(\varphi(A_i))$, $\mu(\varphi(L_1))$, $\mu(\varphi(L_2))$, $\cdots$, $\mu(\varphi(L_n))\}$ forms a regulus in $\S$, then $L_j \|_{x_i} L_k$ for any $j,k \in \{ 1,2,\cdots,n\}$.
\end{lemma}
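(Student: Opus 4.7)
The plan is to exploit the Miquelian inversive plane structure carried by the regular spread $\S$ (Bruck \cite{Bru}) and reduce the statement to a parallelism argument in the affine plane derived at the line $\varphi(A_i)$. The key observation is that a specific parallel class of this derived affine plane collects, on one hand, $\R_0$ itself and, on the other, exactly those reguli through $\varphi(A_i)$ that arise from the $x_i$-parallel classes of $\s{L}$.

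More concretely, I would first assemble the reguli produced in the proof of Theorem \ref{thm regular}. For each $\kappa \in \{1,\ldots,n-1\}$, pick any $M \in \L_{\kappa}^{x_i}$ and let $N$ be the third line of the self-polar triangle through $L$ and $M$ (Lemma \ref{lemma polartri}); the argument establishing Theorem \ref{thm regular} shows that the regulus $\R_i^{\kappa} := \R(\varphi(A_i), \mu(\varphi(M)), \mu(\varphi(N)))$ equals $\{\varphi(A_i)\} \cup \{\mu(\varphi(L')) \mid L' \in \L_{\kappa}^{x_i}\}$. Using that the classes $\L_{\kappa}^{x_i}$ partition $\s{L}$ and that the map $L' \mapsto \mu(\varphi(L'))$ is injective with image disjoint from $\R_0$ (since $\mu(\varphi(L'))$ is not contained in $\h$ whereas every line of $\R_0$ is), one verifies that $\R_0, \R_i^1, \ldots, \R_i^{n-1}$ are $n$ reguli of $\S$ that pairwise meet only at $\varphi(A_i)$ and whose union exhausts all $n^2+1$ lines of $\S$. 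Hence, in the affine plane derived at $\varphi(A_i)$, they constitute a complete parallel class $\mathcal P_0$; verifying this is the main technical obstacle of the proof.

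Finally I would apply this to the regulus $\R$ of the hypothesis. Each $\mu(\varphi(L_k))$ is not contained in $\h$, so $\R \cap \R_0 = \{\varphi(A_i)\}$, which means $\R$ is tangent to $\R_0$ at $\varphi(A_i)$ in the inversive plane and therefore parallel to $\R_0$ in the derived affine plane, i.e.\ $\R \in \mathcal P_0$. As $\R \neq \R_0$ (since $\mu(\varphi(L_1)) \notin \R_0$), $\R$ must coincide with some $\R_i^{\kappa}$, so $\{L_1,\ldots,L_n\} = \L_{\kappa}^{x_i}$. Because the elements of any class $\L_{\kappa}^{x_i}$ are pairwise $x_i$-parallel by definition of the special spread $\m_L$, we conclude $L_j \|_{x_i} L_k$ for all $j,k$, as required.
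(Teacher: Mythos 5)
Your proposal is correct, but it takes a genuinely different and considerably longer route than the paper. The paper's proof is three sentences: pick any line $l$ of the opposite regulus of $\R$; since $l$ meets $\mu(\varphi(A_i))$ and each $\mu(\varphi(L_k))$, it is $\mu(\varphi(K))$ for a unital line $K$ through $x_i$ meeting all of $L_1,\dots,L_n$; the defining property of the special spread $\m_L$ then forces $L_1,\dots,L_n$ into a single class $\L^{x_i}_\kappa$, whose members are pairwise $x_i$-parallel. That argument is local and does not need the regularity of $\S$ or the Miquelian inversive plane $\I_1$. You instead mine the proof of Theorem \ref{thm regular} to exhibit, for the fixed $i$, the reguli $\R_i^{\kappa}=\{\varphi(A_i)\}\cup\{\mu(\varphi(L'))\mid L'\in\L^{x_i}_{\kappa}\}$ (your identification is right: $M$ and its self-polar partner $N$ lie in the same class $\L^{x_i}_{\kappa}$ by Theorem \ref{thm LMN}, and the $n-2$ remaining members are exactly the lines shown in that proof to lie in $\R_i$), check that together with $\R_0$ they cover $\S$ and pairwise meet only at $\varphi(A_i)$, and then force $\R$ into this tangent pencil. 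This buys you more than the lemma asserts --- namely the converse, that every $x_i$-parallel class of $\s{L}$ yields a regulus of $\S$ through $\varphi(A_i)$, which is the fact implicitly used later in Lemma \ref{lemma C0} --- at the cost of invoking Theorem \ref{thm regular} and Bruck's theorem. Two small remarks: the final identification of $\R$ with some $\R_i^{\kappa}$ can be done without the derived affine plane, by pigeonhole on the $n$ lines of $\R\setminus\{\varphi(A_i)\}$ distributed over the $n-1$ sets $\R_i^{\kappa}\setminus\{\varphi(A_i)\}$ together with the fact that a regulus is determined by any three of its lines; and your parenthetical that each $\mu(\varphi(L'))$ avoids $\h$ (hence $\R\cap\R_0=\{\varphi(A_i)\}$) is correct since $\varphi(L')\subset\P\setminus\Sigma$ and $\mu$ is a bijection fixing $\h$ pointwise.
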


\begin{proof}
Let $l$ be a line in the opposite regulus of $\R$.
By the definition of $\mu$ and the construction of $GQ(L)$, there is a line $K$ of $\U$ on $x_i$ such that $l = \mu(\varphi(K))$.
Since $l$ meets $\mu(\varphi(L_1)), \mu(\varphi(L_2)) ,\cdots, \mu(\varphi(L_n))$, the line $K$ meets $L_1, L_2,\cdots,L_n \in \s{L}$ in $\U$.
Since $\m_L$ is a special spread, $L_1, L_2,\cdots,L_n$ are $x_i$-parallel.
\end{proof}

Lemma \ref{lemma C2} describes the reguli of $\S$ with exactly two common lines with $\R_0$. To prove Lemma \ref{lemma C2}, we need Lemma \ref{lemma F(K,K')}.

\begin{lemma}\label{lemma F(K,K')}
Let $i_1,i_2,j_1,j_2 \in \{1,2, \cdots, n + 1\}$ with $i_1 \neq i_2$ and $j_1 \neq j_2$.
Let $K_1 \in \A_{i_1 j_1}$ and $K_2 \in \A_{i_2 j_2}$ be lines of $\U$ meeting at a point $y$ of $\U$.
Let $M$ be a line in $\s{L}$ not through $y$.
Then $\arc {M}$ is in the flock $\F(K_1,K_2)$ in $\I(y)$ if and only if there is a point $z\in M$ such that $z.x_k\in \A_{i_k j_k}$ for $k=1,2$.
\end{lemma}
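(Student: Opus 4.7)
The plan is to apply the Dembowski and Hughes \cite{DH} characterization of flocks: in the egglike inversive plane $\I(y)$, the flock $\F(K_1,K_2)$ is precisely the set of circles tangent to every circle of the bundle $\B(K_1,K_2)$. I would first identify this bundle explicitly, consisting of the single $\C_y$-circle $C_y(K_1,K_2)\cup\{\infty_y\}$ together with the $n$ $\C^y$-circles $\arc{J}$ parameterized by the $\|_y$-equivalence classes of lines of $\U$ that miss $y$ and meet both $K_1$ and $K_2$; note $\arc{L}$ is among them because $L$ joins $x_{i_1}$ to $x_{i_2}$.

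Next, I would reformulate the $\A_{i_k j_k}$-membership intrinsically. By Wilbrink's construction (together with (II), which via an averaging argument ensures that each $\|_{x_1}$-class contains exactly one line through each $x_i$ for $i\geq 2$), the set $\A_{i_k j_k}$ consists of the non-$L$ lines through $x_{i_k}$ lying on the circle $C_{x_{i_k}}(L,K_k)\cup\{\infty_{x_{i_k}}\}$ of the bundle $\B(L,\infty_{x_{i_k}})$ in $\I(x_{i_k})$. Unfolding the definition of $C_{x_{i_k}}(L,K_k)$, the condition $z.x_{i_k}\in\A_{i_k j_k}$ translates, for $z\notin K_k$, into the concrete unital-theoretic statement that no line of $\U$ is a common transversal to the three concurrent lines $L$, $K_k$, and $z.x_{i_k}$ on $x_{i_k}$.

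For the direction ($\Leftarrow$), given $z\in M$ with this no-transversal property at both $x_{i_1}$ and $x_{i_2}$, I would verify each of the $n+1$ tangencies required by Dembowski and Hughes. Tangency of $\arc{M}$ with $\arc{L}$ comes from Lemma \ref{lemma SLinIy}, with the self-polar-triangle exception ($M$ being the self-polar partner $N_y$ of $L$ and the $\s{L}$-line through $y$) reduced to the generic case via Theorem \ref{thm LMN}. Each remaining tangency --- with an $\arc{J_\alpha}$ or with $C_y(K_1,K_2)\cup\{\infty_y\}$ --- is proved by contradiction: an extra common intersection would furnish a line through $y$ which, combined with the no-transversal conditions at $x_{i_1}, x_{i_2}$, yields four lines meeting in six points, contradicting (I). For the direction ($\Rightarrow$), tangency of $\arc{M}$ with $\arc{L}$ pins down a unique line $y.x_{i^*}$ (with $i^*\notin\{i_1,i_2\}$) meeting both $M$ and $L$, and I would take $z:=y.x_{i^*}\cap M$; running the same O'Nan-free argument in reverse, the remaining tangencies then force the no-transversal condition at both $x_{i_1}$ and $x_{i_2}$, giving $z.x_{i_k}\in\A_{i_k j_k}$.

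The hard part will be the combinatorial bookkeeping across the three distinct inversive planes in play --- $\I(y)$, where the flock lives, and $\I(x_{i_1}),\I(x_{i_2})$, where the $\A_{i_k j_k}$'s live --- tracking which tangencies correspond to which no-transversal statements, and arranging each O'Nan-type contradiction to come cleanly from a single invocation of (I). A second delicate point is the self-polar-triangle exception $M=N_y$, which falls outside the scope of Lemma \ref{lemma SLinIy} and must be handled directly via Theorem \ref{thm LMN} before the main bookkeeping argument applies.
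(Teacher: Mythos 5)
Your framework---Dembowski--Hughes plus the reformulation of $z.x_{i_k}\in\A_{i_k j_k}$ as a concircularity/no-transversal condition in $\I(x_{i_k})$---is sound, but your forward direction chooses the wrong point $z$. You take $z$ to be where $M$ meets the unique line $y.x_{i^*}$ witnessing the tangency of $\arc{M}$ with $\arc{L}$. This cannot work: the line $z.y=y.x_{i^*}$ meets $L$ at $x_{i^*}$, meets $K_1$ at $y$, and meets $z.x_{i_1}$ at $z$, three distinct points, so $z.y$ is itself a transversal to $L$, $K_1$ and $z.x_{i_1}$; since $M$ misses the carrier $K_1$, this forces $z.x_{i_1}\notin C_{x_{i_1}}(L,K_1)$ and hence $z.x_{i_1}\notin\A_{i_1 j_1}$. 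The correct $z$ is governed by the other distinguished circle of the bundle: one needs $z.y$ to lie on $C_y(K_1,K_2)$, i.e.\ $z.y$ must be the tangency point of $\arc{M}$ with the unique circle of type $\C_y$ in $\B(K_1,K_2)$, not with $\arc{L}$. The paper reaches this point by applying Lemma \ref{lemma FMN} three times, to $\F(K_1,K_2)$ in $\I(y)$ and to auxiliary flocks $\F(K_k,N_k)$ in $\I(x_{i_k})$, and then matching the resulting special points via the uniqueness clause of Lemma \ref{lemma FMN}\eqref{itemFMN4}.

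The converse direction also has a real gap. Tangency means \emph{exactly} one common point, but your mechanism (``an extra common intersection would furnish \dots an O'Nan configuration'') can only bound intersections from above; it never shows that $\arc{M}$ meets each bundle circle at all, and condition (I) is intrinsically incapable of producing such existence statements. This could in principle be repaired by first proving that $M$ misses $K_1$ and $K_2$ and then pigeonholing the $n+1$ points of $\arc{M}$ over the $n+1$ bundle circles, but neither step appears in your plan; moreover even the ``at most one'' claims are not reduced to a concrete configuration, since it is unclear how the hypotheses at $x_{i_1},x_{i_2}$ supply the fourth line of an O'Nan configuration for an arbitrary bundle circle $\arc{J_\alpha}$. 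The paper sidesteps all of this with a counting argument: exactly $n-2$ circles of $\F(K_1,K_2)$ have type $\C^y$, each is $\arc{M'}$ for a unique $M'\in\s{L}$ by Lemma \ref{lemma SLinIy}\eqref{itemSLinIy2} (every flock circle is tangent to $\arc{L}\in\B(K_1,K_2)$), and exactly $n-2$ points $z\neq y$ satisfy $z.x_{i_1}\in\A_{i_1j_1}$ and $z.x_{i_2}\in\A_{i_2j_2}$; the forward direction then forces this correspondence to be a bijection, which is precisely the converse. You should either adopt that counting scheme or supply the missing existence arguments explicitly.
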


\begin{proof}
Without loss of generality, assume $i_1=j_1=1$ and $i_2=j_2=2$.
Suppose $\arc {M}\in \F(K_1,K_2)$ in $\I(y)$.
For $k=1,2$, let $N_k$ be a line through $x_k$ such that $\arc{M}\in \F(K_k,N_k)$ in $\I(x_k)$.
Applying Lemma \ref{lemma FMN}\eqref{itemFMN1} respectively to $\arc{M}\in \F(K_1,K_2)$ in $\I(y)$, $\arc{M}\in \F(K_1,N_1)$ in $\I(x_1)$ and $\arc{M}\in \F(K_2,N_2)$ in $\I(x_2)$, any line of $\s{M}$ meeting $K_1$ meets $K_2$,$N_1$ and $N_2$.
Since $M \in \s{L}$, we have $L \in \s{M}$ by condition ($p$). Hence, $L$ is the line in $\s{M}$ that meet $K_1,K_2,N_1,N_2$.
By Lemma \ref{lemma FMN}\eqref{itemFMN4}, there is a point $z_k$ on $M$ such that any line from $z_k$ meeting $L$ miss all other lines in $\s{M}$ that meet $K_1,K_2,N_1,N_2$.
By uniqueness in Lemma \ref{lemma FMN}\eqref{itemFMN4}, $z_1=z_2$.
By Lemma \ref{lemma FMN}\eqref{itemFMN2} and \eqref{itemFMN4}, in $\I(x_k)$, the points $z_k.x_k$, $K_k$, $L$, $\infty_z$ are concircular. By definition of $\A_{kk}$, we have $z_k.x_k \in \A_{k k}$ for $k=1,2$.

Conversely, note that there are exactly $n-2$ circles in $\F(K_1,K_2)$ in $\I(y)$ is of type $\C^y$, and each of these circles gives one $z\neq y$ such that $z.x_1 \in \A_{11}$ and $z.x_2 \in \A_{22}$. To prove the converse, it suffices to show there are exactly $n - 2$ such $z$'s.
By definition of $\A_{k k}$, in $\I(x_1)$, lines of $\A_{22}$ correspond to circles of a pencil with carrier $L$, and lines of $\A_{11}$ correspond to points on a circle through $L$ not in that pencil. Hence, each line of $\A_{11}$ meets exactly one line of $\A_{2 2}$ in $\U$. Since $|\A_{11}|=n-1$ and there is only one line of $\A_{11}$ passing through $y$, there are exactly $n - 2$ such $z$'s.
\end{proof}

\begin{lemma}\label{lemma C2}
Let $i_1,i_2 \in \{1,2, \cdots, n + 1\}$ with $i_1 \neq i_2$.
Let $L_1, L_2,\cdots,L_{n-1}\in \s{L}$.
Suppose $\R$ is a regulus of $\S$ containing $\mu(\varphi(A_{i_1}))$, $\mu(\varphi(A_{i_2}))$, $\mu(\varphi(L_1))$, $\mu(\varphi(L_2))$, $\cdots$, $\mu(\varphi(L_{n-1}))$.
Then for any point $z_1\in L_1$, if $K_1$ is the line passing through $z_1$ and $x_{i_1}$, and if $K_2$ is the line passing through $z_1$ and $x_{i_2}$, then in $\I(z_1)$,
$$\F(K_1, K_2) = \{ \arc{L_2},\arc{L_3},\cdots,\arc{L_{n-1}},C\}$$
for some circle $C$ through $\infty_{z_1}$.
\end{lemma}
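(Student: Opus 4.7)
The plan is to verify, for each $k\in\{2,\ldots,n-1\}$, that $\arc{L_k}\in\F(K_1,K_2)$ in $\I(z_1)$ by applying Lemma \ref{lemma F(K,K')}. Writing $K_1\in\A_{i_1,j_1}$ and $K_2\in\A_{i_2,j_2}$, this reduces the claim to producing a point $z_k\in L_k$ with $z_k.x_{i_1}\in\A_{i_1,j_1}$ and $z_k.x_{i_2}\in\A_{i_2,j_2}$. I will produce $z_k$ from the opposite regulus $\R^*$ of $\R$: the unique line $l\in\R^*$ through $\mu(\varphi(z_1))$ meets $\mu(\varphi(L_k))$ in a single point, which must be of the form $\mu(\varphi(z_k))$ for a unique $z_k\in L_k$ since $\mu(\varphi(L_k))$ is disjoint from $\h$.

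The crux is a tangent-plane argument in $\Sigma$. Since $K_1\in\A_{i_1,j_1}$ passes through $z_1$, the line $\mu(\varphi(K_1))$ is tangent to $\h$ at $\varphi(\A_{i_1,j_1})$ and hence lies in the tangent plane $\alpha(\varphi(\A_{i_1,j_1}))$; this plane also contains the entire generator $\varphi(A_{i_1})$. Consequently $\mu(\varphi(z_1))$ (which lies on $\mu(\varphi(K_1))$) and the point $l\cap\varphi(A_{i_1})$ both lie in $\alpha(\varphi(\A_{i_1,j_1}))$, so $l\subset\alpha(\varphi(\A_{i_1,j_1}))$, and in particular $\mu(\varphi(z_k))\in\alpha(\varphi(\A_{i_1,j_1}))$. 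Writing $z_k.x_{i_1}\in\A_{i_1,j'}$, the same reasoning gives $\mu(\varphi(z_k.x_{i_1}))\subset\alpha(\varphi(\A_{i_1,j'}))$; but this line also passes through $\mu(\varphi(z_k))$ and through its endpoint on $\varphi(A_{i_1})$, both of which lie in $\alpha(\varphi(\A_{i_1,j_1}))$, so the line is contained in $\alpha(\varphi(\A_{i_1,j_1}))$ as well. If $j'\neq j_1$ the line would be forced into $\alpha(\varphi(\A_{i_1,j_1}))\cap\alpha(\varphi(\A_{i_1,j'}))=\varphi(A_{i_1})\subset\h$, contradicting $\mu(\varphi(z_k))\notin\h$. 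Hence $j'=j_1$, and a symmetric argument gives $z_k.x_{i_2}\in\A_{i_2,j_2}$, so $\arc{L_k}\in\F(K_1,K_2)$ by Lemma \ref{lemma F(K,K')}.

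To finish, I need the $n-2$ circles $\arc{L_k}$ for $k=2,\ldots,n-1$ to be pairwise distinct. Let $N$ be the line making $L,L_1,N$ a self-polar triangle. I first verify $\mu(\varphi(N))\notin\R$: by Lemma \ref{lemma tube} and Cameron--Knarr, $\S$ is the union of the reguli $\R(\mu(\varphi(L_1)),\mu(\varphi(N)),\mu(\varphi(A_i)))$ for $i=1,\ldots,n+1$, and a cardinality count shows that each such regulus meets $\R_0$ in exactly one line (namely $\mu(\varphi(A_i))$), so any regulus of $\S$ through both $\mu(\varphi(L_1))$ and $\mu(\varphi(N))$ shares only one line with $\R_0$. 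Since $\R$ meets $\R_0$ in two lines, $\mu(\varphi(N))\notin\R$. Hence every $L_k$ with $k\geq 2$ lies in $\m_L\setminus\{L,L_1,N\}$, so Lemma \ref{lemma SLinIy} furnishes both the distinctness of the $\arc{L_k}$ and the fact that each is of type $\C^{z_1}$. Since $\F(K_1,K_2)$ has $n-1$ circles and exactly one of them passes through $\infty_{z_1}$, the remaining circle is forced to be the required $C$ of type $\C_{z_1}$. The principal obstacle is the containment $l\subset\alpha(\varphi(\A_{i_1,j_1}))$, which is the geometric mechanism translating the regulus hypothesis into the $\A$-class containment needed by Lemma \ref{lemma F(K,K')}.
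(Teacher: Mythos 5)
Your proof is correct and follows essentially the same route as the paper: both locate the auxiliary points $z_k$ on the opposite-regulus transversal $l$ through $\mu(\varphi(z_1))$ and use a polarity computation in $\Sigma$ (you show $l$ lies in the tangent planes $\alpha(\varphi(\A_{i_1 j_1}))$ and $\alpha(\varphi(\A_{i_2 j_2}))$, while the paper dually shows $\alpha(l)$ meets $\h$ in those two points and lies in the projected tangent cone) to verify the hypothesis of Lemma \ref{lemma F(K,K')}, and then conclude via Lemma \ref{lemma SLinIy}. Your explicit verification that $\mu(\varphi(N))\notin\R$, which legitimizes the appeal to Lemma \ref{lemma SLinIy} for distinctness, is a worthwhile detail that the paper leaves implicit.
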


\begin{proof}
Let $z$ be a point on $L_1$.
Let $l$ be the line in the opposite regulus of $\R$ through $\mu(\varphi(z))$.
Let $z' \in L_2$ be the unital point such that $\mu(\varphi(z')) \in l$.
Consider $\alpha$ defined in \eqref{eqn alpha}.
Then $\alpha(l)$ meets $\h$ at $\mu(\varphi(\A_{i_1 j_1}))$ and $\mu(\varphi(\A_{i_1 j_2}))$ for some $j_1,j_2$ with $j_1 \neq j_2$, and $\alpha(l)$ lies on the plane $\alpha(\mu(\varphi(z')))$.
Let $\Q$ be the quadratic cone formed by the intersection of $\P$ and the tangent space of $\P$ at $\varphi(z')$.
By Lemma \ref{lemma projcone}, $\mu^{-1}(\alpha(l)) \in \Q$ and so $\varphi(\A_{i_1 j_1}),\varphi(\A_{i_1 j_2}) \in \Q$. Hence $z'$ lies on some unital lines $K_1 \in \A_{i_1 j_1}$ and $K_2 \in \A_{i_1 j_2}$. By Lemma \ref{lemma F(K,K')}, $\arc {L_2}\in \F(K_1,K_2)$ in $\I(z)$.
Similarly, $\arc{L_3},\cdots,\arc{L_{n-1}}\in \F(K_1,K_2)$. The result follows by Lemma \ref{lemma SLinIy}.
\end{proof}

Lemma \ref{lemma C0} gives a characterization of reguli of $\S$ with no common line with $\R_0$, by considering inversive planes whose blocks are defined by the reguli of $\S$. For each line $J$ of $\U$ that misses $L$ and not in $\m_{L}$, let $C(J)$ be the set of images of the $n + 1$ lines of $\s{L}$ meeting $J$ under $\mu\varphi$.

\begin{lemma}\label{lemma C0}
A set of $n+1$ lines of $\S \setminus \R_0$ is a regulus if and only if it is $C(J)$ for some line $J$ of $\U$ missing $L$ and not in $\m_{L}$.
\end{lemma}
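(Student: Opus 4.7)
My plan is to prove both directions of Lemma \ref{lemma C0} by combining the regularity of $\S$ (Theorem \ref{thm regular}), the flock description in Lemma \ref{lemma FMN}, and a counting argument in the Miquelian inversive plane on $\S$. For the ``if'' direction, let $J$ be a line of $\U$ missing $L$ with $J\notin\m_L$, and let $L_1,\ldots,L_{n+1}$ be the lines of $\s{L}$ meeting $J$, so that $C(J)=\{\mu(\varphi(L_i)):1\le i\le n+1\}\subset\S\setminus\R_0$. Since $\S$ is regular, the three mutually disjoint lines $\mu(\varphi(L_1)),\mu(\varphi(L_2)),\mu(\varphi(L_3))$ determine a unique regulus $\R\subset\S$. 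I would first show $\R\cap\R_0=\emptyset$: otherwise $\varphi(A_i)\in\R$ for some $i$, and Lemma \ref{lemma C1} would force $L_1,L_2,L_3$ to be pairwise $x_i$-parallel, producing $n+1$ transversals through $x_i$ to $L_1,L_2,L_3$; adding $J$, which misses $L$ and hence bypasses $x_i$, would give $\ge n+2$ transversals to three disjoint lines, contradicting the bound of $n+1$. Hence $\R=\{\mu(\varphi(L'_1)),\ldots,\mu(\varphi(L'_{n+1}))\}$ with each $L'_k\in\s{L}$ and $L'_k=L_k$ for $k=1,2,3$.

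The main obstacle is to show $\{L'_k\}=\{L_k\}$, i.e.\ that every line of $\R$ actually meets $J$ in $\U$. My plan is to analyze the opposite regulus of $\R$: each transversal $l$ of $\R$ is an external line to $\h$ in $\Sigma$ (its $n+1$ points all lying on lines $\mu(\varphi(L'_k))\subset\Sigma\setminus\h$) and lifts under $(\mu\varphi)^{-1}$ to $n+1$ non-$L$ unital points, one on each $L'_k$. Proving these $n+1$ points collinear in $\U$ yields a $\U$-line $J_l$ transversal to every $L'_k$, which must then be $L$-parallel or $L$-non-parallel to $J$, forcing $\{L'_k\}=\{L_k\}$. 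For the collinearity I would fix $y\in J$ and use the flock $\F(J,N')$ in $\I(y)$ with $\arc{L}\in\F(J,N')$ supplied by the proof of Lemma \ref{lemma parallel class}; by Lemma \ref{lemma FMN} the bundle $\B(J,N')$ is $\{\arc{L_2},\ldots,\arc{L_{n+1}},D\}$ for a unique circle $D$ of type $\C_y$ through $L_1$ and $\infty_y$. Adapting the polarity/conic technique from the proof of Lemma \ref{lemma C2}---where now the polar line $\alpha(l)$ is a secant to $\h$ meeting it at two points $\mu(\varphi(\A_{ab})),\mu(\varphi(\A_{cd}))$---I would match the $\I(y)$-bundle data with the regulus structure in $\Sigma$ and thereby pin down the $L'_k$'s.

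For the ``only if'' direction I would conclude by counting. By Bruck \cite{Bru}, the reguli of $\S$ are the circles of a Miquelian inversive plane of order $n$ on the point set $\S$ in which $\R_0$ is a distinguished circle; inclusion-exclusion on the $n+1$ points of $\R_0$ yields exactly $\tfrac{n(n-1)(n-2)}{2}$ circles disjoint from $\R_0$. On the other hand, Lemma \ref{lemma equiv relation} provides $n(n-1)(n-2)$ $L$-parallel classes of lines missing $L$ not in $\m_L$, which by Lemma \ref{lemma parallel class} pair up (an $L$-parallel class with its $L$-non-parallel partner covering the same $(n+1)^2$ points) so that each pair yields a single set $C(J)$; distinct pairs produce distinct $C(J)$'s because $C(J)$ determines the triply ruled set of $(n+1)^2$ points and hence both classes, giving $\tfrac{n(n-1)(n-2)}{2}$ values of $C(J)$. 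Combined with the ``if'' direction, which injects these values into the set of reguli of $\S$ disjoint from $\R_0$, equality of cardinalities forces the injection to be a bijection, establishing the ``only if'' direction.
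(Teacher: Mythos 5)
Your outline has a genuine gap at exactly the point you flag as the main obstacle: showing that the regulus $\R$ determined by $\mu(\varphi(L_1)),\mu(\varphi(L_2)),\mu(\varphi(L_3))$ coincides with $C(J)$. The mechanism you propose --- lift each transversal $l$ of $\R$ to $n+1$ points of $\U$, prove them collinear on a line $J_l$, and conclude that $J_l$ is $L$-parallel or $L$-non-parallel to $J$ --- fails twice. First, the collinearity claim is false: a line of $\U$ missing $L$ and not in $\m_L$ does \emph{not} map under $\mu\varphi$ to a line of $\Sigma$; the paper later shows (proof of Lemma \ref{lemma Jcoplanar}) that its image is an irreducible conic on the hyperbolic quadric carrying $C(J)$, not a line of the opposite regulus. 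A quick sanity check: Lemma \ref{lemma parallel class} supplies $2(n+1)$ unital transversals $M_i,N_j$ to $L_1,\dots,L_{n+1}$ with every $M_i$ meeting every $N_j$, whereas an opposite regulus consists of only $n+1$ pairwise skew lines; since $\mu\varphi$ is a bijection on points, these transversals cannot all have lines as images. Second, even granting a transversal line $J_l$, the inference ``$J_l$ is $L$-(non-)parallel to $J$, forcing $\{L'_k\}=\{L_k\}$'' is circular: $L$-parallelism of $J_l$ and $J$ \emph{means} that they meet the same lines of $\s{L}$, which is the conclusion sought. Passing from ``three common members of $\s{L}$'' to ``all $n+1$ common'' is precisely the hard content, and it requires the self-polar-triangle argument via Theorem \ref{thm LMN} and Wilbrink's Lemma 1 that the paper supplies. (A smaller hole: your proof that $\R\cap\R_0=\emptyset$ only treats the case $|\R\cap\R_0|=1$, where Lemma \ref{lemma C1} applies; the case $|\R\cap\R_0|=2$ needs Lemma \ref{lemma C2} and the disjointness of circles in a flock.)

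The paper's route is structurally different and you should note why. It never proves the ``if'' direction in isolation. Instead it forms the incidence structure $\I_2$ obtained from the Miquelian inversive plane $\I_1=(\S,\C)$ by deleting the circles disjoint from $\R_0$ and inserting the sets $C(J)$, verifies that $\I_2$ is a $3$--$(n^2+1,n+1,1)$ design --- the substantive step being that any $C(J)$ meets every other block in at most two points, checked against tangent blocks via Lemma \ref{lemma C1}, secant blocks via Lemma \ref{lemma C2}, and other blocks $C(J')$ via the parallelism argument above --- and then invokes Theorem 2 of \cite{H} to conclude $\I_1=\I_2$, hence $\C_0=\C_0^*$. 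That external theorem is the pivotal ingredient your proposal neither uses nor replaces. Your counts (both $|\C_0|$ and $|\C_0^*|$ equal $n(n-1)(n-2)/2$) are correct and mirror counts the paper also makes, but the ``only if by cardinality'' conclusion is idle until the ``if'' direction is actually established.
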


\begin{proof}
Consider the incidence structure
\begin{equation}
\I_1 = (\S , \C)
\end{equation}
where $\C$ is the set of the reguli of $\S$. By Theorem 4.5 (iv) of Bruck \cite{Bru}, $\I_1$ is the Miquelian inversive plane of order $n$. Note that $\R_0$ is a circle of $\I_1$.
We denote by $\C_0$ the set of those circles in $\I_1$ disjoint from $\R_0$;
by $\C_1$ the set of those circles in $\I_1$ tangent to $\R_0$;
by $\C_2$ the set of those circles in $\I_1$ secant to $\R_0$.

Let $\C_0^* = \{ C(J) \mid J$ is a line of $\U$ missing $L$ and not in $\m_{L} \}$.
Now considering the incidence structure
\begin{equation}
\I_2 = (\S , (\C\setminus \C_0) \cup \C_0^* ).
\end{equation}

By Theorem 2 of \cite{H}, provided that $\I_2$ is an inversive plane of order $n$, we will have $\I_1 = \I_2$ and thus $\C_0=\C_0^*$. Hence, to prove Lemma \ref{lemma C0}, {\it it suffices to prove that $\I_2$ is a $3$--$(n^2 + 1,n + 1,1)$ design.}

\smallskip

Since $\S$ has $n^2 + 1$ lines, $\I_2$ has $n^2 + 1$ points.

A block in $\C_0^*$ has exactly $n + 1$ points because every line which is not in $\m_{L}$ and which misses $L$ meets exactly $n + 1$ lines of $\s{L}$. Other blocks of $\I_2$ has exactly $n+1$ points because every regulus of $\S$ consists of $n+1$ lines.

$|\C_0^*| = n(n - 1)(n - 2)/2$ because there are $n(n - 1)(n - 2)$ $L$-parallel classes by Lemma \ref{lemma equiv relation}, and any class and its $L$-non-parallel class define a same block of $\I_2$. Since $\I_1$ is an inversive plane, $|\C\setminus \C_0|=n^2(n+3)/2$. Thus $\I_2$ has $n (n^2 + 1)$ blocks.

It remains to show that any two distinct blocks of $\I_2$ have at most two common points. Since $\I_2$ only differs from the inversive plane $\I_1$ in blocks missing $\R_0$, we only need to consider the case when one of the two blocks belongs to $\C_0^*$.
Let $C(J)$ be a block in $\C_0^*$.
Let $\mu(\varphi(L_k)),k=1,2,3$ be distinct points of $C(J)$, where $L_1,L_2,L_3 \in \s{L}$.

Suppose $\mu(\varphi(L_k)),k=1,2,3$, are on a block in $\C_1$.
By Lemma \ref{lemma C1}, $L_1$, $L_2$, $L_3$ are $x$-parallel for some $x$ on $L$. Since $J$ meets $L_1,L_2,L_3$ but does not pass though $x$, there is an O'Nan configuration, contradicting (I).

Suppose $\mu(\varphi(L_k)),k=1,2,3$, are on a block in $\C_2$ which contains $\mu(\varphi(A_i))$ and $\mu(\varphi(A_j))$.
Let $z$ be a point intersection of $J$ and $L_1$.
Let $K_i, K_j$ be the lines through $z$ that pass through $x_i$ and $x_j$ respectively.
By Lemma \ref{lemma C2}, $\arc {L_2}, \arc {L_3}$ are distinct circles in $\F(K_i, K_j)$ in $\I(z)$. Since circles in a flock are disjoint, $\arc {L_2}$ and $\arc {L_3}$ are disjoint. This contradicts that $J$ is a line through $z$ meeting $L_2$ and $L_3$.

Suppose $\mu(\varphi(L_k)),k=1,2,3$, are on a block $C(J_2)$ in $\C_0^*$.
Suppose $J$ and $J'$ are not $L$-parallel or $L$-non-parallel.
We claim that $L_2 \|_y L_3$ for any point $y \in L_1$. If the claim is true, then $L_1,L_2,L_3$ is a self-polar triangle by Theorem \ref{thm LMN} and so $\s{L_2} \cap \s{L_3} = \{L_1\}$.
Since $L\in \s{L_2} \cap \s{L_3}$ by condition ($p$) and $L \neq L_1$, a contradiction rises.
It follows that $J$ and $J'$ are $L$-parallel or $L$-non-parallel, and $C(J) = C(J')$.

To see that $L_2 \|_y L_3$ for any $y \in L_1$. Note that, if $y$ is a point on $L_1$, then the four lines on $y$ which are respectively $L$-parallel to $J_1$, $L$-non-parallel to $J_1$, $L$-parallel to $J_2$ and $L$-non-parallel to $J_2$, meet both $L_2$ and $L_3$. Thus by Wilbrink \cite[Lemma 1]{Wil}, $L_2 \|_y L_3$.

Hence, a block in $\C_0^*$ has at most two common points with any other block. Thus, $\I_2$ is an inversive plane of order $n$.
\end{proof}

To prove the main theorem in this section, we need Lemma \ref{lemma C0} and that fact that when $q$ is even, if a regular spread of $\PG(3,q)$ is partitioned into two lines and $q-1$ reguli, then the two lines and the hyperbolic quadrics containing the reguli lie in a pencil of quadrics (Hirschfeld \cite[Lemma 17.1.5, Corollary of Theorem 17.1.6]{Hir2}).

\begin{theorem}\label{thm pencil}
Let $M$, $N$ be lines in $\s{L}$.
Suppose $L,M,N$ form a self-polar triangle with respect to $\U$.
Consider the lines $L^1_{1},L^1_{2},\cdots,L^1_{n + 1}$, $\cdots$, $L^{n - 2}_{1},\cdots,L^{n - 2}_{n + 1}$, $M^1_{1},M^1_{2},\cdots,M^1_{n + 1}$ as obtained in the construction described in Theorem \ref{thm bigP}.
For $i=1,2,\cdots,n-1$, let $\h_i$ be the image set of the points on $L^i_1,L^i_{2},\cdots,L^i_{n + 1}$ under $\mu\varphi$.
Consider $\h$ is defined in Section \ref{st 4}.
Then $\{ \h_i \mid i=1,2,\cdots,n-2\}\cup \{\h\} \cup \{ \mu(\varphi(M)),\mu(\varphi(N)) \}$ is a pencil of quadrics in $\Sigma$ of two lines and $n-1$ hyperbolic quadrics.
\end{theorem}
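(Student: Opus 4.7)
The plan is to identify each $\h_i$ with the hyperbolic quadric ruled by a specific regulus of the spread $\S$, assemble these reguli together with $\R_0$ into a partition of $\S\setminus\{\mu(\varphi(M)),\mu(\varphi(N))\}$, and then invoke the Hirschfeld result cited just before the theorem, which says that a regular spread of $\PG(3,n)$ partitioned into two lines and $n-1$ reguli forces the two lines together with the $n-1$ hyperbolic quadrics containing those reguli to belong to a common pencil.

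First, for each fixed $i\in\{1,\dots,n-2\}$ I would show that $\{\mu(\varphi(L^i_j)):j=1,\dots,n+1\}$ is a regulus of $\S$. By Lemma~\ref{lemma Mvarphimu}, each $\mu(\varphi(L^i_j))$ is a line of $\Sigma$ lying in $\S$, since $L^i_j\in\s{L}$. To apply Lemma~\ref{lemma C0} I need a line $J$ of $\U$ which misses $L$, does not lie in $\m_L$, and meets every $L^i_j$. Any $M^i_k$ from Theorem~\ref{thm bigP} serves: since $L,M,N$ form a self-polar triangle, $L\in\m_M$ and $M^i_k\in\s{M}$, so $L$ and $M^i_k$ are distinct lines of the spread $\m_M$ and hence disjoint; moreover $\s{L}\cap\s{M}=\{N\}$ together with $M^i_k\in\m_M\setminus\{L,M,N\}$ gives $M^i_k\notin\s{L}$ and $M^i_k\neq L$, so $M^i_k\notin\m_L$; finally, the triply-ruled property in Theorem~\ref{thm bigP} forces $M^i_k$ to meet each $L^i_j$ in a single point. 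Thus $\{\mu(\varphi(L^i_j))\}_{j=1}^{n+1}=C(M^i_k)$ is a regulus of $\S$ by Lemma~\ref{lemma C0}, and since $\mu\varphi$ is injective on the points of $\U$ off $L$, the points covered by this regulus are precisely $\h_i$; therefore $\h_i$ is a hyperbolic quadric of $\Sigma$ ruled by this regulus.

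Next I would note that $\h$ is the hyperbolic quadric $\Sigma\cap\P$ ruled by $\R_0$, by construction in Section~\ref{st 4} and definition~\eqref{eqn R0}. Combining \eqref{eqn S} with the decomposition
\[
\s{L}=\{M,N\}\ \sqcup\ \bigsqcup_{i=1}^{n-2}\{L^i_1,\dots,L^i_{n+1}\}
\]
from Theorem~\ref{thm bigP} yields
\[
\S\setminus\{\mu(\varphi(M)),\mu(\varphi(N))\}\ =\ \R_0\ \sqcup\ \bigsqcup_{i=1}^{n-2}\{\mu(\varphi(L^i_j)):j=1,\dots,n+1\},
\]
a disjoint union of $n-1$ reguli, whose total cardinality $(n-1)(n+1)=n^2-1=|\S|-2$ checks out. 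Since $\S$ is regular by Theorem~\ref{thm regular}, the Hirschfeld result then applies and delivers precisely the stated conclusion: the pair of skew lines $\mu(\varphi(M)),\mu(\varphi(N))$ together with the $n-1$ hyperbolic quadrics $\h,\h_1,\dots,\h_{n-2}$ lie in a pencil of quadrics in $\Sigma$.

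The main obstacle is the first step, and in particular the verification that $M^i_k\notin\m_L$; this is the place where the self-polar triangle hypothesis enters in an essential way, via the identity $\s{L}\cap\s{M}=\{N\}$, and it is what unlocks Lemma~\ref{lemma C0}. Once each layer of Theorem~\ref{thm bigP} has been recognised as a regulus in $\S$, the partition of $\S$ and the appeal to the quoted Hirschfeld lemma are routine.
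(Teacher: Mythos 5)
Your proposal is correct and follows essentially the same route as the paper: identify each layer $\{L^i_1,\dots,L^i_{n+1}\}$ as a regulus $C(M^i_1)$ of $\S$ via Lemma~\ref{lemma C0}, decompose $\S$ as $\R_0$ together with these $n-2$ reguli and the two lines $\mu(\varphi(M)),\mu(\varphi(N))$, and conclude by the cited Hirschfeld result using the regularity of $\S$ from Theorem~\ref{thm regular}. The only difference is that you spell out the verification that $M^i_k$ misses $L$, lies outside $\m_L$, and meets every $L^i_j$, which the paper leaves implicit.
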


\begin{proof}
By Lemma \ref{lemma C0}, for $i=1,2,\cdots,n-2$, the block $C(M^i_1)=\{\mu(\varphi(L^i_1))$, $\mu(\varphi(L^i_{2}))$, $\cdots$, $\mu(\varphi(L^i_{n + 1})) \}$ is a regulus. Thus $\h_i$ is a hyperbolic quadric.
By the definition of $\S$,
$\S = \Big(\bigcup_{i = 1}^{n - 2} C(M^i_1) \Big) \cup \R_0 \cup \{\mu(\varphi(M)), \mu(\varphi(N))\}$.
Since $\S$ is regular by Theorem \ref{thm regular}, the result follows by \cite{Hir2} mentioned above.
\end{proof}

\section{Completion of the proof that $\U$ is classical}\label{st 6}
\noindent
In this section, we use the notations in Section \ref{st 4} and complete the proof that $\U$ is classical. Recall from Section \ref{st 1} that we wish to show that $\U$ is isomorphic to the hyperbolic Buekenhout unital $\U'$ in $\PG(2,n^2) \cong \overline{ \pi(\S)}$ defined by $\P$ in Section \ref{st 4} under the Bruck-Bose representation \cite{BB1,BB2}. To this end, we define an isomorphism $\varphi'$ from $\varphi$, where $\varphi$ is the isomorphism between $GQ(L)$ and $Q(4,n)$ defined in \eqref{eqn mu}.

\begin{lemma}\label{lemma Jcoplanar}
Let $J$ be a line of $\U$ missing $L$ and not in $\m_L$.
Then $\varphi(J)$ lies on a plane which contains a line of $\S$.
\end{lemma}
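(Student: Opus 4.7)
The plan is to identify an explicit plane $\Pi_J$ containing $\varphi(J)$, using a self-polar triangle derived from $J \in \s{M}$. First, by Lemmas~\ref{lemma parallel class} and~\ref{lemma concurrent}, there exists $M \in \s{L}$ with $J \in \s{M}$; let $L, M, N$ be the associated self-polar triangle (Lemma~\ref{lemma polartri}). By Lemma~\ref{lemma LJzparallel} and the counting in its proof, there is exactly one point $w^* \in M$ satisfying $L \|_{w^*} J$. Then every line through $w^*$ meeting $J$ also meets $L$, so for each $y \in J$ the unital line $\overline{w^*y}$ is a line of $GQ(L)$; in $Q(4,n)$ this means $\varphi(y)$ and $\varphi(w^*)$ are collinear. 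Hence $\varphi(J)$ lies on the quadric cone $\Q_{w^*} := T_{w^*} \cap \P$ with apex $\varphi(w^*)$, where $T_{w^*}$ denotes the tangent $3$-space of $\P$ at $\varphi(w^*)$.

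Next I would show $\varphi(J)$ is a non-degenerate conic on $\Q_{w^*}$. The points of $J$ are pairwise non-collinear in $GQ(L)$ (since $J$ is not a line of $GQ(L)$), and no three of them are collinear in $\PG(4,n)$, for otherwise the line joining them would lie on $\P$, making them pairwise collinear in $Q(4,n)$. Thus $\varphi(J)$ spans a unique plane $\Pi_J \subset T_{w^*}$ avoiding the apex. The candidate spread line is $\mu(\varphi(N))$: by Theorem~\ref{thm LMN}, the self-polar triangle also gives $L \|_{w^*} N$, so the same argument yields $\varphi(N) \subset T_{w^*}$, and Lemma~\ref{lemma Mvarphimu} shows $\mu(\varphi(N))$ is a line lying in $T_{w^*} \cap \Sigma = \alpha(\mu(\varphi(w^*)))$ (Lemma~\ref{lemma projcone}). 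A short count (if $k$ spread lines lie in a given plane of $\Sigma$ and the remaining $n^2+1-k$ meet it in a single point, then $k(n+1) + (n^2+1-k) = n^2+n+1$ forces $k=1$) shows that any plane of $\Sigma$ contains exactly one spread line, so $\mu(\varphi(N))$ is the unique line of $\S$ inside $T_{w^*} \cap \Sigma$.

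It would then remain to show $\mu(\varphi(N)) \subset \Pi_J$; granting this, $\Pi_J \cap \Sigma$ is a line of $T_{w^*} \cap \Sigma$ containing $\mu(\varphi(N))$, hence equal to it and lying in $\S$, which completes the proof. This is the main obstacle. Concretely, for each pair of distinct points $y, y' \in J$, one must verify that the secant line $\overline{\varphi(y)\varphi(y')}$ meets $\mu(\varphi(N))$ in $\Sigma$. My plan is to produce, for each such pair, a point $z \in N$ that is a common $GQ(L)$-neighbor of $y$ and $y'$ distinct from $w^*$, synthesized from the parallelism $L \|_{w^*} J$ and the Theorem~\ref{thm LMN} symmetries of the triangle $L, M, N$. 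The polarity of $\P$ in characteristic two will then place $\varphi(z)$ in the polar plane $T_y \cap T_{y'}$, and projecting via $\mu$ through the nucleus $\mathbf{N}$ will identify $\overline{\varphi(y)\varphi(y')} \cap \Sigma$ with $\mu(\varphi(z))$, which lies on $\mu(\varphi(N))$ by construction.
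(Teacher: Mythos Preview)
Your setup through the first paragraph is fine and matches the paper: the line $M\in\s{L}$ with $J\in\s{M}$, the self-polar triangle $L,M,N$, the point $w^*\in M$ with $L\|_{w^*}J$, and the containment $\varphi(J)\subset\Q_{w^*}$ are exactly what the paper uses (with $z$ in place of your $w^*$). After that, however, there are two genuine gaps.

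\textbf{Coplanarity does not follow from ``no three collinear''.} You argue that no three points of $\varphi(J)$ are collinear in $\PG(4,n)$ and then assert that $\varphi(J)$ spans a plane $\Pi_J$. This is a non sequitur: $\varphi(J)$ consists of $n+1$ points, one on each generator of the cone $\Q_{w^*}$ in the $3$-space $T_{w^*}$, and there are $n^{n+1}$ such transversal sets but only $n^3$ planes missing the vertex, so most such sets are not coplanar. Establishing coplanarity is essentially the whole content of the lemma. (This gap could in principle be repaired \emph{a posteriori} if your secant claim below were correct, since if every secant of $\varphi(J)$ met a fixed line then coplanarity would follow; but see the next point.)

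\textbf{The polarity step is incorrect.} You propose to show that for each pair $y,y'\in J$ the secant $\ell=\overline{\varphi(y)\varphi(y')}$ meets $\Sigma$ in the point $\mu(\varphi(z))$, where $z\in N$ is a common $GQ(L)$-neighbour of $y$ and $y'$. But the common-neighbour condition places $\varphi(z)$ in the \emph{polar plane} $\ell^{\perp}=T_{\varphi(y)}\cap T_{\varphi(y')}$, and since the symplectic form (in characteristic~$2$) restricted to the secant $\ell$ is nondegenerate (as $y,y'$ are non-collinear in $Q(4,n)$), one has $\ell\cap\ell^{\perp}=\varnothing$. Because $\mathbf{N}\in\ell^{\perp}$, the line $\overline{\mathbf{N}\,\varphi(z)}$ lies entirely in $\ell^{\perp}$, so $\mu(\varphi(z))\in\ell^{\perp}\cap\Sigma$; hence $\mu(\varphi(z))$ can \emph{never} equal $\ell\cap\Sigma$. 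The identification you want simply does not hold, and the argument cannot be completed along these lines. (You also do not justify that a common neighbour lying on $N$ exists for every pair $y,y'$, which is not obvious.)

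The paper closes these gaps by an entirely different mechanism: it introduces a \emph{second} cone $\Q_1$ with vertex the nucleus $\mathbf{N}$ and base $\mu(\varphi(J))$, shows via Lemma~\ref{lemma C0} that this base lies on a hyperbolic quadric $\h_1$ and is therefore an irreducible conic, uses Luyckx's lemma on conics with a common nucleus to force $\Q_0\cap\Pi=\Q_1\cap\Pi$ (whence $\varphi(J)$ is coplanar), and then invokes Theorem~\ref{thm pencil} to see that $\h$ and $\h_1$ meet $\mu(\varphi(N))$ in the same conjugate point pair over $\mathbb{F}_{n^2}$, which puts $\mu(\varphi(N))$ in $\Pi$. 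Your outline bypasses both Lemma~\ref{lemma C0} and Theorem~\ref{thm pencil}; without a substitute for these, the argument does not go through.
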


\begin{proof}
We locate $\varphi(J)$ as a subset of an intersection of two quadratic cones, as follows.
Let $M$ be the line in $\s{L}$ such that $J\in \s{M}$.
By Lemma \ref{lemma LJzparallel}, there is a point $z \in M$ such that $L\|_z J$.
Let $\Q_0$ be the quadratic cone formed by the intersection of $\P$ and the tangent space of $\P$ at $\varphi(z)$.
By the definition of $GQ(L)$,
\begin{equation}\label{eqn 2}
\varphi(J) \subset \Q_0.
\end{equation}
By Lemma \ref{lemma projcone}, $\mu(\Q_0)\cap \h$ is a base of $\Q_0$ and its nucleus is $\mu(\varphi(z))$.
On the other hand, let $\Q_1$ be the cone with vertex $\bf N$ and a base $\mu(\varphi(J))$.
By the definition of $\mu$, we have
\begin{equation}\label{eqn 3}
\varphi(J) \subset \Q_1.
\end{equation}

We are going to show $\Q_1$ is a quadratic cone by studying $\mu(\varphi(J))$.
By Lemma \ref{lemma projcone} and \eqref{eqn 2},
$\mu(\varphi(J))$ lies on the plane $\mu(\Q_0)$.
Let $\h_1 = \{ \mu(\varphi(y)) \mid y$ is a point of $\U$ on a line of $\s{L}$ meeting $J\}$.
By Lemma \ref{lemma C0},
$\h_1$ is a hyperbolic quadric in $\Sigma$.
Note that $\mu(\varphi(J)) \subset \h_{1}$.
To see that the plane $\mu(\Q_0)$ is secant to $\h_1$,
consider the line $N$ of $\U$ such that $L,M,N$ form a self-polar triangle with respect to $\U$.
By Theorem \ref{thm LMN}, $L \|_z N$.
Thus, $\Q_0$ contains $\varphi(N)$,
and the plane $\mu(\Q_0)$ contains $\mu(\varphi(N))$.
Since $L\|_z J \|_z N$ and $N\in \s{L}$, we conclude that $N$ is disjoint from any line of $\s{L}$ that meets $J$.
Thus, $\mu(\varphi(N))$ is external to $\h_{1}$.
Thus, $\mu(\varphi(J))=\mu(\Q_0)\cap \h_1$ and the base $\mu(\varphi(J))$ of $\Q_1$ is an irreducible conic (Figure \ref{fig lemma61}).

\begin{figure}[!ht]
\centering
 \includegraphics[height = 6cm]{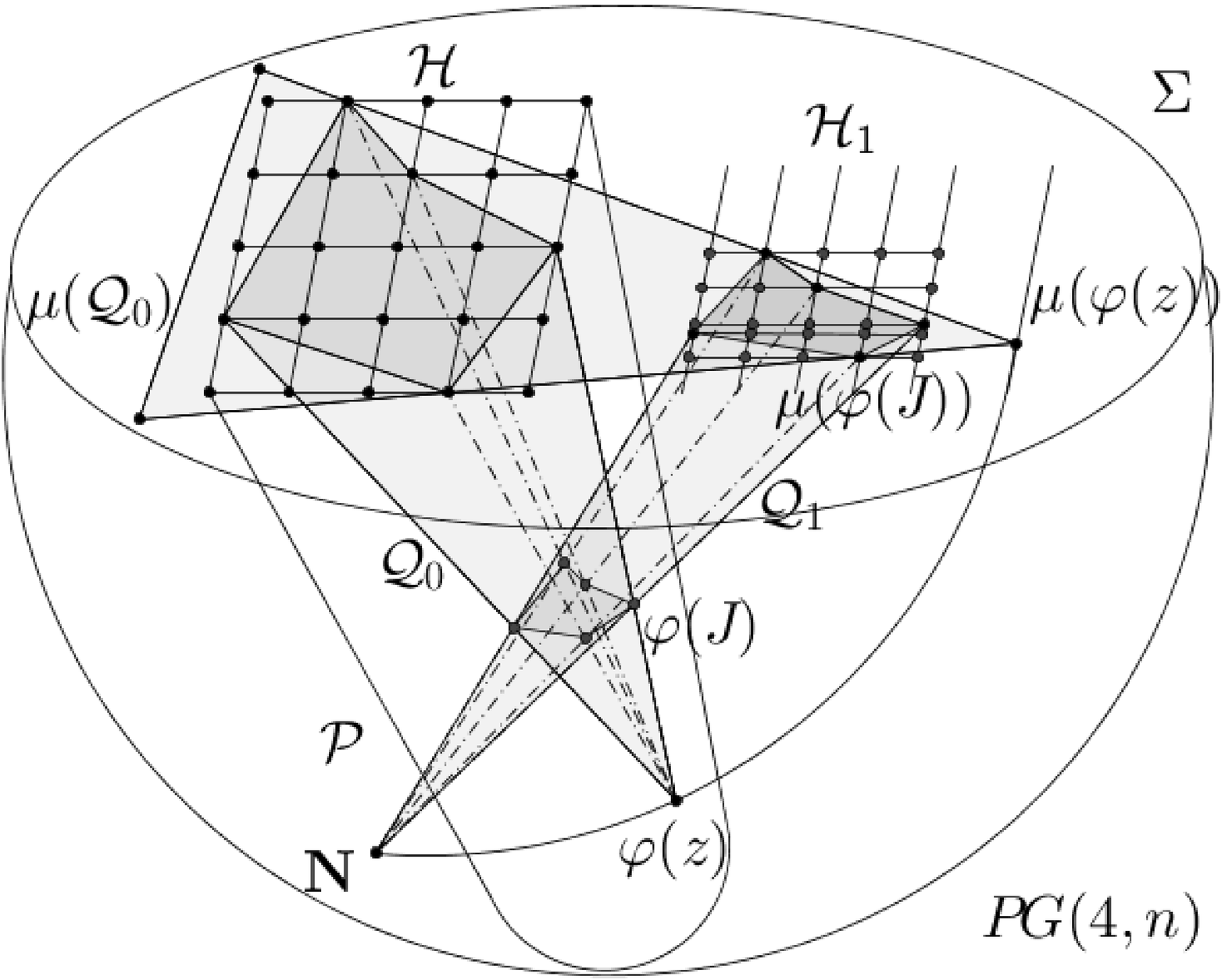}
 \label{fig lemma61}
 \caption{$\varphi(J) \subset \Q_0\cap \Q_1$}
\end{figure}

We then study $\Q_0 \cap \Q_1$.
Let $\Pi$ be the plane determined by three distinct points in $\Q_0 \cap \Q_1$.
Then $\Q_0 \cap \Pi$ and $\Q_1 \cap \Pi$ are irreducible conics.
Since $L$ is $z$-parallel to $J$, the nucleus of $\mu(\varphi(J))$ is $\mu(\varphi(z))$.
Since the nuclei of $\Q_0 \cap \mu (\Q_0)$ and $\Q_1 \cap \mu (\Q_0)$ are both $\mu(\varphi(z))$,
the conics $\Q_0 \cap \Pi$ and $\Q_1 \cap \Pi$ have a same nucleus, namely the intersection of $\Pi$ and the line through $\bf N$, $\varphi(z)$ and $\mu(\varphi(z))$.
Since $\Q_0 \cap \Pi$ and $\Q_1 \cap \Pi$ are irreducible conics with the same nucleus and containing three distinct common points, $\Q_0 \cap \Pi=\Q_1 \cap \Pi$ by Lemma 2.1 of Luyckx \cite{L}.
Thus, $\varphi(J)\subset \Pi$.

By Theorem \ref{thm pencil}, $\h_1, \h,\mu(\varphi(M)),\mu(\varphi(N))$ are quadrics in a same pencil.
Thus, $\h_{1}$ and $\h$ meet $\mu(\varphi(N))$ in the same conjugate pair of points with respect to $\mathbb F_{q^2}$ \cite[Theorem 5.1]{BH}.
Hence the bases of $\Q_0$ and $\Q_1$ pass through the conjugate pair of points in $\PG(4,q^2)$, and so do $\Q_0 \cap \Q_1$ and $\Pi$. Hence, the plane $\Pi$, where $\varphi(J)$ lies on, contains the line $\mu(\varphi(N)) \in \S$.
\end{proof}

We prove that a unital $\U$ is classical by studying the hyperbolic Buekenhout unital $\U'$ in $\overline{ \pi(\S)}$ defined by $\P$, where $\overline{ \pi(\S)}$ is the projective plane constructed by the Bruck-Bose construction \cite{BB1,BB2}
(see Andr\'{e} \cite{An} for an alternative treatment).

\begin{theorem}\label{thm main}
$\U$ is classical.
\end{theorem}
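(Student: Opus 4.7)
The plan is to prove Theorem \ref{thm main} by constructing an explicit isomorphism of designs $\varphi'\colon\U\to\U'$ that extends the GQ-isomorphism $\varphi\colon GQ(L)\to Q(4,n)$ of \eqref{eqn varphi}. Since $\S$ is regular by Theorem \ref{thm regular}, the Bruck-Bose plane $\overline{\pi(\S)}$ is isomorphic to $\PG(2,n^2)$ \cite{BB2}, so the hyperbolic Buekenhout unital $\U'$ that $\P$ defines in $\overline{\pi(\S)}$ is classical by Barwick \cite{Bar}; exhibiting a design-isomorphism $\U\to\U'$ therefore suffices.

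For the point map I would set $\varphi'(y)=\varphi(y)\in\P\setminus\Sigma$ for each $y\in\U\setminus L$ (identified with an affine point of $\U'$) and $\varphi'(x_i)=\varphi(A_i)\in\R_0$ for each $x_i\in L$ (identified with a point of $\U'$ at infinity, since the points at infinity of $\U'$ are precisely the lines of $\S$ contained in $\h\subset\P$, namely $\R_0$). Because $\varphi$ is already a bijection, counting forces $\varphi'$ to be a bijection between the $n^3+1$ points of $\U$ and the $n^3+1$ points of $\U'$.

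Next I would verify that $\varphi'$ carries lines of $\U$ to lines of $\U'$ by handling four cases. (i) $\varphi'(L)=\R_0$ is precisely the intersection of $\U'$ with the line at infinity of $\overline{\pi(\S)}$. (ii) For a line $K$ of $\U$ through some $x_i$, the two generators $\varphi(A_i)$ and $\varphi(K)$ of $\P$ meet at $\varphi(\A_{ij(K)})$, so the $2$-plane $\Pi=\langle\varphi(A_i),\varphi(K)\rangle$ meets $\P$ in $\varphi(A_i)\cup\varphi(K)$ and meets $\Sigma$ in the line $\varphi(A_i)\in\R_0\subset\S$, whence the line of $\overline{\pi(\S)}$ carried by $\Pi$ meets $\U'$ in exactly $\varphi'(K)$. (iii) For $J\in\s{L}$, Lemma \ref{lemma Mvarphimu} and the definition of $\S$ in \eqref{eqn S} give $\mu(\varphi(J))\in\S\setminus\R_0$; the $2$-plane $\Pi_0=\langle\mathbf{N},\mu(\varphi(J))\rangle$ through the nucleus contains $\varphi(J)$, and because $n$ is even every line through $\mathbf{N}$ meets $\P$ in exactly one point, forcing $\Pi_0\cap\P=\varphi(J)$; since the infinite point $\mu(\varphi(J))$ of the associated line of $\overline{\pi(\S)}$ is not on $\U'$, that line meets $\U'$ exactly in $\varphi'(J)$.

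The hard case, and the main obstacle, is (iv): a line $J$ of $\U$ with $J$ missing $L$ and $J\notin\m_L$. This is precisely what Lemma \ref{lemma Jcoplanar} is tailored to address; it produces a $2$-plane $\Pi_1$ of $\PG(4,n)$ containing $\varphi(J)$ together with a line of $\S$, which from the proof of that lemma I would identify with $\mu(\varphi(N))$, where $N$ is the third side of the self-polar triangle formed by $L$ and the unique $M\in\s{L}$ with $J\in\s{M}$. Since $\mu(\varphi(N))\in\S\setminus\R_0$ is disjoint from $\h$, any generator of $\P$ lying in $\Pi_1$ would have to meet $\Sigma$ inside $\h\cap\mu(\varphi(N))=\emptyset$, which is impossible; hence $\Pi_1\cap\P$ is an irreducible conic, necessarily equal to $\varphi(J)$, and the line of $\overline{\pi(\S)}$ carried by $\Pi_1$ meets $\U'$ exactly in $\varphi'(J)$. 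Combining the four cases with a count of lines then finishes the isomorphism $\varphi'\colon\U\to\U'$ and proves that $\U$ is classical.
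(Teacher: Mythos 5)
Your proposal is correct and follows essentially the same route as the paper: regularity of $\S$ gives $\overline{\pi(\S)}\cong\PG(2,n^2)$ via Bruck--Bose, Barwick's theorem makes the Buekenhout unital $\U'$ classical, and the explicit map $\varphi'$ extending $\varphi$ is checked to send lines to lines in the same four cases, with Lemma \ref{lemma Jcoplanar} handling the lines missing $L$ and not in $\m_L$. Your case analysis only spells out in more detail why each relevant plane meets $\P$ exactly in the image of the unital line, which the paper leaves more implicit.
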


\begin{proof}
Consider the incidence structure $\pi(\S)$ whose points are the affine points of $\PG(4,q)$ and whose lines are the affine planes of $\PG(4,q)$ each containing a line of $\S$. The incidence of $\pi(\S)$ is the incidence of $\PG(4,q)$. By Bruck and Bose \cite{BB1}, $\pi(\S)$ is an affine translation plane of order $q^2$.
Complete $\pi(\S)$ to a projective plane $\overline{ \pi(\S)}$ by adding a line at infinity, $L_\infty$.
Since $\S$ is regular, $\overline{ \pi(\S)}$ is Desarguesian \cite{BB2}.
By Buekenhout \cite{Bkt}, $\P$ corresponds to a hyperbolic Buekenhout unital $\U'$ in $\overline{ \pi(\S)}$.
Let $a_i$ be the points on $L_\infty$ corresponding to the $\varphi(A_i)$'s in $\R_0$.
Then the point set of $\U'$ is $(\P \setminus \Sigma) \cup \{a_1,a_2,\cdots,a_{q + 1}\}$.
Since $\S$ is regular by Theorem \ref{thm regular}, $\U'$ is classical by Barwick \cite{Bar}.

Let $\varphi': \U \longrightarrow \U'$ be a function defined by
$\varphi'(x_i) = a_i $ for $ i = 1,2,\cdots, n + 1$;
$\varphi'(y) = \varphi(y) $ for any point $y$ of $\U$ not on $L$;
$\varphi'(J) = \{\varphi(y) \mid y \in J \}$ for any line $J$ of $\U$ missing $L$;
$\varphi'(K) = \{\varphi(y) \mid y \in K \setminus L \} \cup \{a_i\}$ for any line $K$ of $\U$ meeting $L$ at some point $x_i$;
$\varphi'(L) = \{a_1,a_2,\cdots,a_{q + 1}\}$.

Note that $\varphi'$ is a well-defined function.
Indeed, for any line $J_1 \notin \s{L}$ of $\U$ missing $L$, its image $\varphi'(J_1)$ is on a secant plane on a line of $\S$ by Lemma \ref{lemma Jcoplanar}, and hence is a line of $\U'$.
As for a line $J_2 \in \s{L}$ of $\U$ missing $L$, its image $\varphi'(J_2)$ is on the secant plane of $\P$ determined by the point $\bf N$ and the line $\mu(\varphi(J_2))\in \S$, and hence $\varphi'(J_2)$ is a line of $\U'$.
As for a line $K$ meeting $L$ at some $x_i$, its image $\varphi'(K)$ consists of $a_i$ and the $n$ affine points of $\PG(4,n)$ on a line of $\P$ meeting $\varphi(A_i)$, and hence is a line of $\U'$. Since $\varphi$ is an isomorphism, $\varphi'$ preserves incidence. Clearly, $\varphi'$ is injective. Thus, $\varphi'$ is a design isomorphism and $\U$ is classical.
\end{proof}

\begin{remark}
By \cite[Theorem 1.1]{HW2}, since $\U$ satisfies ($p$), $\U$ can be embedded in a projective plane $\pi$ as a polar unital. The author does not know whether $\pi$ is Desarguesian or not.
\end{remark}

\begin{acknowledgment}
This article is part of the author's Ph.D. thesis, written under the supervision of Philip P.W. Wong at the University of Hong Kong. The author acknowledges the suggestions of Philip Wong during the preparation of the paper. The author also wish to thank Tim Penttila and Yee Ka Tai for their suggestions of simplifying the proof of Lemma \ref{lemma Jcoplanar}.
\end{acknowledgment}

\end{document}